\theoremstyle{plain}
\newtheorem{theorem}{Theorem}[section]
\newtheorem{lemma}[theorem]{Lemma}
\newtheorem{proposition}[theorem]{Proposition}
\theoremstyle{definition}
\newtheorem{definition}[theorem]{Definition}
\theoremstyle{remark}
\newtheorem{remark}[theorem]{Remark}
\begin{document}

\title{Spectral Diffusion Models on the Sphere}

\author{Francesco Mari}
\address{Department of Statistical Sciences, Sapienza University of Rome, Italy}
\email{francesco.mari@uniroma1.it}

\author{Pierpaolo Brutti}
\address{Department of Statistical Sciences, Sapienza University of Rome, Italy}
\email{pierpaolo.brutti@uniroma1.it}

\author{Claudio Durastanti}
\address{Department of Basic and Applied Sciences for Engineering, Sapienza University of Rome, Italy}
\email{claudio.durastanti@uniroma1.it}

\begin{abstract}
Diffusion models provide a principled framework for generative modeling via stochastic
differential equations and time-reversed dynamics. However, extension of spectral diffusion
approaches to spherical data raises nontrivial geometric and stochastic
issues that are absent in the Euclidean setting.

In this work, we develop a diffusion modeling framework defined directly on
finite-dimensional spherical harmonic representations of real-valued functions on
the sphere. We show that the spherical discrete Fourier transform maps spatial Brownian
motion to a constrained Gaussian process in the frequency domain with deterministic,
generally non-isotropic covariance. This induces modified forward- and reverse-time
stochastic differential equations in the spectral domain.

As a consequence, spatial and spectral score matching objectives are generally no longer equivalent, even in the band-limited setting. We establish a quantitative relationship between the two objectives, showing that the geometry-induced covariance of the spectral noise gives rise to a distinct, geometry-dependent inductive bias. We also derive the corresponding forward and reverse diffusion equations and characterize the induced noise covariance.\\
\textbf{Keywords:} Diffusion models, Score-based generative modeling, 
Spherical harmonics, Spectral diffusion, Spherical data, 
Stochastic differential equations\\
\textbf{MSC Classification:} 60H10, 42C10, 62H11
\end{abstract}

\maketitle

\section{Introduction}

Deep generative models have become a central approach for learning complex high-dimensional probability distributions. Among them, diffusion models have emerged as one of the most successful frameworks, achieving state-of-the-art performance in a wide range of generative tasks (see, for example, ~\cite{ho20,hyvarinen05,sohl15,song20}). Their formulation in terms of stochastic differential equations (SDEs) and reverse-time stochastic dynamics provides both strong empirical performance and a mathematically transparent probabilistic framework.

Among the various formulations of diffusion models, score-based generative modeling via stochastic differential equations (SDEs) provides a unified and mathematically transparent framework, see~\cite{song20}. In this setting, a forward SDE progressively transforms the data distribution to a tractable reference measure, typically a standard Gaussian, while generation is achieved by simulating the associated reverse-time SDE, whose drift depends on the score function \(\nabla_x \log p_t(x)\). This continuous-time formulation has recently received rigorous treatment in the mathematical literature through the theory of time-reversed stochastic differential equations, further strengthening its probabilistic foundations. 

Crucially, diffusion models are not invariant in the choice of representation. Changing the representation space modifies the driving noise and, consequently, the reverse-time dynamics (see, for example, \cite{deb22}). Different representations therefore induce different geometric and statistical structures on the diffusion process, leading to distinct inductive biases and different generative performance (cf., among others, \cite{rombach2022}). More generally, the influence of geometry on diffusion dynamics has recently attracted increasing attention in the study of stochastic processes on structured and non-Euclidean spaces (cf.~\cite{ding25}).

While diffusion models were originally developed for Euclidean data such as images and audio, there is growing interest in representations adapted to the underlying data geometry. Time series provide a relevant example, where the frequency domain naturally captures long-range dependencies and multiscale structure. Recent work has shown that diffusion models defined directly in the frequency domain can preserve the score-matching framework while improving the modeling of global temporal structure (see, among others~\cite{crabbe24}), suggesting that spectral diffusion introduces a distinct inductive bias rather than merely providing an alternative parameterization.

Many important datasets are naturally defined on spherical domains. Examples include full-sky cosmological observations such as cosmic microwave background and large-scale structure maps~\cite{cm24,PDKS19}, global climate and Earth observation fields~\cite{cocs17,uaskaog24}, molecular surface representations in computational chemistry~\cite{mgm08,wbags11}, and omnidirectional visual data captured by 360\(^\circ\) cameras~\cite{cgc18,mbamr22}. For such data, spherical harmonics provide the canonical spectral representation, naturally encoding the geometry and rotational symmetries of the sphere. Nevertheless, extending spectral diffusion from Euclidean or temporal settings to spherical data is substantially more challenging. Unlike the Euclidean Fourier transform, the spherical harmonic transform of real-valued signals imposes conjugate symmetry constraints and, under discrete quadrature, maps spatial Brownian motion to a correlated Gaussian process in the spectral domain. Consequently, the driving noise is no longer isotropic, and both the forward and reverse-time SDEs, as well as the associated score matching objective, must be properly modified. 

While harmonic analysis has long provided a natural framework for the study of stochastic processes on the sphere, including stationary spherical time series and ARMA-type models (see, for example, \cite{cm21,cdv21}), these works do not address the forward and reverse stochastic dynamics underlying score-based generative modeling. Existing work on score-based diffusion over Riemannian manifolds similarly focuses on intrinsic diffusion processes~\cite{deb22}. In contrast, our goal is to formulate intrinsically nonstationary score-based diffusion processes in a finite-dimensional spherical harmonic representation, where the geometry is encoded through spectral constraints and the induced noise covariance.

Despite the success of spectral diffusion models for Euclidean and sequential data, score-based diffusion models defined directly in the spherical harmonic domain are comparatively unexplored.
In particular, a systematic formulation of score-based diffusion directly in the spherical harmonic domain, accounting for symmetry constraints, induced noise covariance, and the resulting forward and reverse-time dynamics, is largely missing from the literature.  
This motivates the development of a score-based diffusion framework intrinsically adapted to spherical harmonic representations, together with a theoretical characterization of the corresponding learning objectives.

In this work, we propose a diffusion modeling framework defined directly in the space of spherical harmonic coefficients. We show that the spherical Fourier transform maps standard Brownian motion in the spatial domain to a constrained stochastic process in the frequency domain, which we characterize as a spherical mirrored Brownian motion with nontrivial covariance structure. Building on this characterization, we derive the corresponding forward and reverse-time SDEs and characterize the induced covariance structure. Our main theoretical result establishes a quantitative relationship between score-matching objectives in the spatial and spectral domains, showing that they are generally not equivalent because of the geometry-induced covariance of the spectral noise. The resulting framework is fully compatible with the score-based SDE formalism, while naturally encoding spherical geometry and symmetry constraints.  
It therefore provides a principled foundation for generative modeling of spherical data, with potential applications in cosmology, climate science, computational chemistry, and computer graphics.

Several natural extensions of the present framework merit future investigation.
While all constructions in this work are finite-dimensional with a fixed band
limit $L$, an important open question concerns the asymptotic behavior of the
spectral diffusion process as $L \to \infty$, and its relationship to diffusions
defined intrinsically on $L^2(\mathbb{S}^2)$. Related directions include the study
of robustness with respect to the choice of spherical sampling and quadrature
schemes, as well as extensions to anisotropic noise, other compact homogeneous
spaces, or alternative spectral representations.

The remainder of the paper is organized as follows.  
Section~\ref{sec:background} reviews score-based diffusion models and the elements of harmonic analysis required to define diffusion processes on \( \mathbb{S}^2 \).  
Section~\ref{sec:contribution} presents our main theoretical results, developing a matrix-based formulation of the spherical discrete Fourier transform, deriving the associated forward and reverse diffusion SDEs in the frequency domain, and establishing the relationship between spatial and spectral score-matching objectives. Section~\ref{numerics} provides illustrative numerical experiments on synthetic, band-limited spherical signals, designed to complement the theoretical analysis and highlight the geometric and stochastic effects induced by the spectral formulation. 
Section~\ref{sec:proofs} contains all proofs.

\section{Background}\label{sec:background} 
To establish the foundations of our spectral diffusion framework on the sphere, we first review score-based generative modeling in Euclidean spaces (see, among others,~\cite{hyvarinen05,song19}), and then review the elements of harmonic analysis required to extend these ideas to spherical domains (cf.~\cite{mp11,ah12}). \\ 
Throughout the paper, vectors in Euclidean space are denoted by bold lowercase letters \( \mathbf{x} \in \mathbb{R}^n \), while complex-valued vectors are written \( \mathbf{z} \in \mathbb{C}^d \), with \( \mathfrak{R}[\mathbf{z}] \) and \( \mathfrak{I}[\mathbf{z}] \) denoting their real and imaginary parts and $\mathbf{z}^*$ the complex conjugate of $\mathbf{z}$.  
Time-dependent stochastic processes are written \( \mathbf{x}(t) \),  indexed by a continuous diffusion variable $t\in[0,T]$. With
the slight abuse of notation we shall abbreviate  their marginal densities \( p_t(\mathbf{x}(t)) \) by $p_t(\mathbf{x})$. Finally, $\mathbf{w}(t)$ denotes a standard $n$-dimensional Brownian motion.

\subsection{Score-Based Generative Modeling in Euclidean Space} 
Score-based diffusion models construct generative mechanisms by coupling stochastic dynamics with learned gradient fields of evolving probability densities.\\  
The central idea is to define a forward-time stochastic process that gradually transforms the data distribution into a simple reference measure, and then to invert this process by simulating the corresponding reverse-time dynamics.

\paragraph{Forward Diffusion.} 
Let \( \mathbf{x} \in \mathbb{R}^n \) denote data drawn from an unknown density \( p_{\mathrm{data}} \).  \\
In continuous-time diffusion models, the forward process is defined by the stochastic differential equation
\begin{equation}\label{eq:forwardSDE}
d\mathbf{x}_t = \mathbf{f}(\mathbf{x}_t,t)dt + \mathbf{G}(t)d\mathbf{w}_t,
\end{equation}
where \( \mathbf{f}(\mathbf{x}_t,t) \) and \( \mathbf{G}(t) \) are a drift term and a time-dependent diffusion matrix respectively.

We denote \( p_t \) the marginal density of the solution \( \mathbf{x}(t) \) of \eqref{eq:forwardSDE}, with initial condition \( p_0 = p_{\mathrm{data}} \).  
The functions \( \mathbf{f} \) and \( \mathbf{G} \) are chosen so that, as \( t \to T \), the distribution \( p_t \) converges to the density \( p_T \) of an isotropic standard Gaussian.

\paragraph{Reverse Diffusion.} 
Generation is achieved by reversing the forward diffusion.  \\
Starting from samples drawn from \( p_T \), one simulates a reverse-time stochastic process whose marginals evolve toward \( p_0 \).  
As shown in the classical work~\cite{anderson82}, the reverse-time dynamics satisfy
\begin{equation}\label{eq:revdif}
d\mathbf{x}_t = \mathbf{b}(\mathbf{x}_t,t) dt + \mathbf{G}(t) d\hat{\mathbf{w}_t},
\end{equation}
where the reverse-time process evolves from \( T \) to \( 0 \), \( \hat{\mathbf{w}} \) is Brownian motion in reverse time, and
\[
\mathbf{b}(\mathbf{x}_t,t)
=
\mathbf{f}(\mathbf{x}_t,t)
-
\mathbf{G}(t)\mathbf{G}(t)^{\mathsf T}
\nabla_{\mathbf{x}} \log p_t(\mathbf{x}).
\]
The reverse drift depends explicitly on the score function \( \nabla_{\mathbf{x}} \log p_t(\mathbf{x}) \).  
Once this score is known or well approximated, the reverse SDE can be simulated to generate samples from the data distribution.

\paragraph{Score Estimation.}
Since the intermediate densities \( p_t \) are unknown, score-based diffusion models learn an approximation of the score function directly from data.  
This is achieved via denoising score matching~\cite{hyvarinen05,song19}, by training a neural network \( s_\theta(\mathbf{x},t) \) to minimize
\begin{equation}\label{eq:score_matching}
\theta^*
\!=
\underset{\theta}{\mathrm{argmin}}
\, \mathbb{E}_t
\! \left[
\lambda(t)\,
 \mathbb{E}_{\mathbf{x}(0)}
\mathbb{E}_{\mathbf{x}(t)\vert \mathbf{x}(0)}\!
\left\|
s_\theta\left( \mathbf{x}(t),t \right)
\!-\!
\nabla_{\mathbf{x}(t)} \!\log p_{0,t}\!\left( \mathbf{x}(t)\big\vert \mathbf{x}(0)\right)
\right\|_2^2
\right]\!,
\end{equation}
where \( p_{0,t} \) denotes the transition kernel of the forward SDE.  
Under mild regularity assumptions and sufficient model capacity, the minimizer recovers the true score almost everywhere.

As aforementioned, this perspective highlights that diffusion models are fundamentally constrained by the geometry and representation of the data space, especially outside the Euclidean case (see, for example, \cite{deb22,rombach2022}).

\subsection{Harmonic Analysis on the Sphere}
Spherical harmonics provide the natural analogue of the Fourier basis for functions defined on the unit sphere \( \mathbb{S}^2 \). 
They form an orthonormal basis of \( L^2(\mathbb{S}^2) \), allowing spherical data to be represented through frequency components indexed by degree and order. \\ 
This spectral decomposition is central to both theoretical analysis and practical modeling, as it separates global and local features, enables the construction of rotationally invariant operators, and supports scale-dependent regularization and stochastic dynamics.

Harmonic representations on the sphere have long played a fundamental role in statistical inference for spherical data, with applications ranging from spectral estimation in Gaussian models~\cite{dlm14} to spherical time series and functional autoregressions~\cite{cm21}.  \\
From the perspective of generative modeling, these properties make spherical harmonics a natural vehicle for transferring diffusion-based methodologies from Euclidean spaces to spherical domains, while respecting intrinsic geometry and rotational symmetries.

\paragraph{Spherical Harmonics.}
Let \( L^2(\mathbb{S}^2) \) denote the space of real-valued square-integrable functions on the unit sphere, equipped with the inner product
\[
\langle f,g\rangle
=
\int_{\mathbb{S}^2} f(\theta,\phi)g^*(\theta,\phi)d\Omega(\theta,\phi),
\qquad
d\Omega = \sin\theta d\theta d\phi,
\]
where \( (\theta,\phi)\in [0,\pi]\times[0,2\pi) \) denote colatitude and longitude respectively.

An orthonormal basis of \( L^2(\mathbb{S}^2) \) is given by the spherical harmonics \[\left\{Y_{\ell,m}(\theta,\phi):\ell \in \mathbb{N}_0,m=-\ell,\ldots,\ell\right\},\] indexed by degree \( \ell \in \mathbb{N}_0 \) and order \( |m|\le\ell \), defined as
\[
Y_{\ell,m}(\theta,\phi)
=
\sqrt{\frac{2\ell+1}{4\pi}\frac{(\ell-m)!}{(\ell+m)!}}P_{\ell,m}(\cos\theta)e^{im\phi},
\qquad m\ge0,
\]
with the extension to \( m<0 \) via conjugate symmetry.  
Here \( P_{\ell,m} \) are the associated Legendre functions~\cite{szego75}.

\noindent Any \( f\in L^2(\mathbb{S}^2) \) admits the spherical harmonic expansion
\[
f(\theta,\phi)
=
\sum_{\ell=0}^\infty
\sum_{m=-\ell}^{\ell}
a_{\ell,m} Y_{\ell,m}(\theta,\phi),
\qquad
a_{\ell,m}
=
\langle f,Y_{\ell,m}\rangle.
\]
The coefficients $\left\{a_{\ell,m}:\ell \in \mathbb{N}_0,m=-\ell,\ldots,\ell\right\}$ encode angular frequency content, with \( \ell \) controlling spatial scale and \( m \) azimuthal oscillations.

The following properties are particularly relevant for spectral modeling:
\begin{itemize}
\item \textit{Orthonormality:} \( \langle Y_{\ell,m},Y_{\ell',m'}\rangle=\delta_{\ell\ell'}\delta_{mm'} \).
\item \textit{Conjugate symmetry:} \( Y_{\ell,-m}=(-1)^m Y^*_{\ell,m} \).
\item \textit{Eigenfunction property:} \( \Delta_{\mathbb{S}^2}Y_{\ell,m}=-\ell(\ell+1)Y_{\ell,m} \), where $\Delta_{\mathbb{S}^2}$ is the spherical Laplacian, that is the Laplace-Beltrami operator on $\mathbb{S}^2$.
\end{itemize}
In view of the symmetry property for the spherical harmonics, also the coefficients satisfy the Hermitian conjugate symmetry
\begin{equation}\label{eq:symm}
\hat{a}_{\ell,m} = (-1)^m \hat{a}^*_{\ell,-m}, \quad 0<\ell<L, \ |m|<\ell,
\end{equation}
ensuring that the reconstructed spatial function $f(\theta,\phi)$ is real-valued.
\paragraph{Band-Limited Functions and Sampling.}
In practice, we work with band-limited functions for which \( a_{\ell,m}=0 \) for \( \ell\ge L \).  
Such functions admit exact reconstruction from finitely many samples using appropriate quadrature schemes~\cite{dh94}.

Throughout this work, we adopt a uniform sampling scheme following~\cite{mcewen12}, which provides explicit sampling nodes and quadrature weights enabling exact analysis and synthesis of band-limited functions. More in detail, for a function band-limited at degree $L$, exact recovery is possible from a finite set of samples. Following \cite{mcewen12}, we introduce uniform sampling nodes in colatitude and longitude,
\[
\theta_j = \frac{(2j+1)\pi}{4L}, \quad j=0,\dots,2L-1, 
\qquad 
\phi_k = \frac{2\pi k}{2L-1}, \quad k=0,\dots,2L-2,
\]
corresponding to evenly spaced points in colatitude and longitude, respectively.  

To account for the spherical integration measure, we associate quadrature weights
\[
q_j^{(L)} = \frac{2}{L} \sin\theta_j 
\sum_{\ell=0}^{L-1} \frac{1}{2\ell+1}\sin\big( (2\ell+1)\theta_j\big),
\]
which ensure exact integration of spherical harmonics of degree $\ell<L$.  

The resulting analysis formula recovers the harmonic coefficients exactly,
\begin{equation}\label{eqn:analysis}
\hat{a}_{\ell,m}
=
\frac{2\pi}{2L-1}
\sum_{j=0}^{2L-1}\sum_{k=0}^{2L-2}
q_j^{(L)} f(\theta_j,\phi_k) Y_{\ell,m}^*(\theta_j,\phi_k),
\qquad \ell<L,
\end{equation}
while the corresponding \emph{synthesis formula} reconstructs the band-limited function on the grid,
\begin{equation}\label{eqn:synthesis}
f(\theta_j,\phi_k)
=
\sum_{\ell=0}^{L-1}\sum_{m=-\ell}^{\ell}
\hat{a}_{\ell,m} Y_{\ell,m}(\theta_j,\phi_k).
\end{equation}
This exact quadrature scheme underlies practical spherical Fourier transforms and discrete spectral methods, and provides the computational basis for probabilistic modeling in the harmonic domain.
\begin{remark}
Throughout this section we adopt the equiangular sampling grid of~\cite{mcewen12}, which admits exact quadrature for $L$-band-limited functions and leads to a particularly simple Kronecker structure for the weight matrix $Q$.  
This choice is made for notational and computational convenience only.  
All results extend to any spherical sampling scheme admitting exact quadrature for $L$-band-limited functions, such as Gauss--Legendre or related schemes~\cite{ln97}, provided the corresponding spherical harmonics matrix $Y$ and weight matrix $Q$ are defined accordingly. In all cases, it is essential to balance the number of sampling points with the bandwidth \( L \) to avoid aliasing effects~\cite{dur24,dp19}.
The diffusion formalism developed below depends only on the linearity of the spherical DFT and on the induced covariance structure of the transformed noise.
\end{remark}

\paragraph{Probability Densities in the Spectral Domain.} 
To extend diffusion models to the spectral domain, we define a probability density over the complex-valued spherical harmonic coefficients $\hat{a}_{\ell,m} \in \mathbb{C}$. These coefficients are collected in $\hat{\mathbf{a}} \in \mathbb{C}^{\hat{d}_X}$, where 
\[
\hat{d}_X = \sum_{\ell=0}^{L-1} (2\ell+1) = L^2,
\]
counts the total number of independent coefficients. Following the standard treatments of complex random vectors~\cite{scs10}, we can write the score in terms of real and imaginary parts:
\[
\hat{\mathbf{s}}(\hat{\mathbf{a}}) := 
\nabla_{\mathfrak{R}[\hat{\mathbf{a}}]} \log \hat{p}(\hat{\mathbf{a}}) 
+ i  \nabla_{\mathfrak{I}[\hat{\mathbf{a}}]} \log \hat{p}(\hat{\mathbf{a}}),
\]
with $\mathfrak{R}[\cdot]$ and $\mathfrak{I}[\cdot]$ denoting the real and imaginary components. This allows treating complex coefficients as structured real vectors, suitable for score-based methods in the spectral domain.

 Not all coefficients are independent: the admissible set forms a linear subspace (equivalently, a lower-dimensional embedded submanifold) of $\mathbb{C}^{\hat{d}_X}$, and the corresponding score function must respect the mirrored symmetry given by Equation \eqref{eq:symm}
\[
\hat{\mathbf{s}}_{\ell,m} = (-1)^m \hat{\mathbf{s}}^*_{\ell,-m},
\]
where $\hat{\mathbf{s}}_{\ell,m}=[\hat{\mathbf{s}}( \hat{\mathbf{a}})]_{\ell,m}$, for the sake of brevity.\\ 
This constraint guarantees that the stochastic dynamics of the spectral diffusion process preserve the reality of the underlying spatial function. By working directly with these constrained coefficients, both forward and reverse diffusions can be defined in the harmonic domain. This naturally encodes the sphere's geometry and symmetries while retaining all information needed for exact reconstruction. Such formulation underpins the spectral diffusion framework developed throughout this work.

These symmetry constraints will play a crucial role in determining the covariance structure of the driving noise in the spectral diffusion SDEs derived in Section~\ref{sec:contribution}.

\section{Diffusion Models in the Frequency Domain for Spherical Data}
\label{sec:contribution}

In the previous section, we reviewed the score-based diffusion framework in Euclidean spaces and recalled the elements of harmonic analysis required to represent functions on the sphere. 
We now specialize to real-valued functions defined on the unit sphere $\mathbb{S}^2$ and show how diffusion in the frequency domain can be formulated in the spherical setting. All constructions in this section are finite-dimensional, with a fixed band limit $L$; asymptotic regimes as $L \to \infty$ are left for future work. \\
To this end, we first derive a matrix representation of the spherical discrete Fourier transform, and then use it to define forward and reverse diffusion processes directly in the spectral domain.

Throughout the remainder of this work, we consider an equiangular sampling grid on the sphere as in~\cite{mcewen12}, consisting of $N_\theta \cdot N_\phi$ points, where
\[
N_\theta = 2L,
\qquad
N_\phi = 2L-1,
\]
denote the numbers of colatitude and longitude samples, respectively.

We work with samples of a real-valued spherical signal arranged in vector form as
\[
\mathbf{x}^{\mathsf T}
=
\left(
f(\theta_0,\phi_0),
f(\theta_0,\phi_1),
\dots,
f(\theta_{N_\theta-1},\phi_{N_\phi-1})
\right),
\qquad
\mathbf{x}\in\mathbb{R}^{N_\theta N_\phi}.
\]
The corresponding vector of reconstructed spherical harmonic coefficients is denoted by
\begin{equation}
\label{eq: spherical_harmonic_coeff}
\hat{\mathbf{a}}^{\mathsf T}
=
\left(
\hat{a}_{0,0},
\hat{a}_{1,0},
\hat{a}_{1,1},
\hat{a}_{1,-1},
\hat{a}_{2,0},
\hat{a}_{2,1},
\hat{a}_{2,-1},
\hat{a}_{2,2},
\dots,
\hat{a}_{L-1,-L+1}
\right),
\qquad
\hat{\mathbf{a}}\in\mathbb{C}^{L^2}.    
\end{equation}
Finally, we denote by
\[
d_X = N_\theta N_\phi = 2L(2L-1)
\qquad\text{and}\qquad
\hat{d}_X = \sum_{\ell=0}^{L-1}(2\ell+1)=L^2,
\]
the dimensions of the spatial and spectral representations, respectively.

\subsection{Matrix expression for the spherical DFT}    
The crucial point is that the spherical discrete Fourier transform (DFT),
$\hat{\mathbf{a}} = \mathcal{F}[\mathbf{x}]$, is \emph{linear} with respect to
the sampled signal $\mathbf{x}$. This linearity allows the transform to be
represented as a matrix operator acting on the vector of spatial samples.

To make this representation explicit, we introduce two matrices associated with
the spherical DFT: a full complex matrix $Y$, collecting the values of the
spherical harmonics evaluated on the sampling grid, and a diagonal matrix $Q$,
encoding the quadrature weights required for exact integration of
$L$-band-limited functions on the sphere (see
Section~\ref{sec:background}).

To keep the notation concise, from now on and until the end of this section we
adopt the following analysis formula:
\begin{equation}
    \hat{a}_{\ell, m}
    \;=\;
    \sum_{j=0}^{2L-1}\sum_{k=0}^{2L-2}
    q_{j}^{(L)}\;
    f(\theta_j,\phi_k)\;
    Y_{\ell m}^*(\theta_j,\phi_k),
\end{equation}
which expresses each spectral coefficient as a weighted inner product between
the sampled field and the corresponding spherical harmonic. The quadrature weights are given by
\[q_j^{(L)}
    = \frac{2\pi}{2L-1}
    \frac{2}{L}
    \sin\theta_j
    \sum_{\ell=0}^{L-1}
    \frac{1}{2\ell+1}\sin \big(  (2\ell+1)\theta_j\big) \quad \text{for} \quad j=0,\dots,2L-1.
    \]
Compared to the standard formulation, the prefactor $\frac{2\pi}{2L-1}$ has been
absorbed into the definition of the quadrature weights, so that the remaining
expressions take a compact matrix form.

\begin{definition}[Spherical harmonics matrix]
\label{def:shmat}
Let $f \in L^2(\mathbb{S}^2)$ be $L$-band-limited, and let
$f(\theta_j,\phi_k)$ denote its samples for
$j = 0,\dots,N_\theta-1$ and $k = 0,\dots,N_\phi-1$.
The spherical harmonics matrix
$Y \in \mathbb{C}^{d_X \times \hat{d}_X}$ is defined component-wise by
\[
    [Y]_{(j,k),(\ell,m)} = Y_{\ell m}(\theta_j,\phi_k).
\]
\end{definition}

The matrix $Y$ collects all spherical harmonics up to degree $L-1$ evaluated at
the sampling locations. Columns correspond to spectral modes $(\ell,m)$, according to the same order of coefficients in \eqref{eq: spherical_harmonic_coeff}, while
rows correspond to spatial grid points $(\theta_j,\phi_k)$. This matrix plays
the role of a spherical analogue of the Fourier matrix in the Euclidean DFT.

\begin{definition}[Sample weights matrix]
\label{def:samplewmat}
Under the assumptions of Definition~\ref{def:shmat}, let
\[
    Q_\theta
    =    \operatorname{diag}(q_0,q_1,\dots,q_{N_\theta-1})
    \in  \mathbb{R}^{N_\theta \times N_\theta}.
\]
The sample weights matrix
$Q \in \mathbb{R}^{d_X \times d_X}$ is defined by
\begin{equation}
    Q = Q_\theta \otimes I_{N_\phi},
\end{equation}
where $\otimes$ denotes the Kronecker product and
$I_{N_\phi}$ is the identity matrix of size $N_\phi = 2L-1$.
\end{definition}

The Kronecker structure reflects the fact that the quadrature weights depend
only on the colatitude index $j$, while sampling along the longitude direction
is uniform.

We are now ready to derive an equivalent matrix--vector representation of the spherical DFT.

\begin{lemma}[DFT of a real-valued spherical map]
\label{lemma:DFT_mat_rep}
Let $f \in L^2(\mathbb{S}^2)$ be $L$-band-limited, and let 
\[
f(\theta_j,\phi_k)_{j=0,\dots,2L-1, k=0,\dots,2L-2}
\] 
be its samples on the equiangular grid. Define the vector of spatial samples 
\[
\mathbf{x}^{\sf T} = \left(f(\theta_0,\phi_0), f(\theta_0,\phi_1), \dots, f(\theta_{2L-1},\phi_{2L-2})\right) \in \mathbb{R}^{2L(2L-1)},
\] 
and let $\hat{\mathbf{a}} \in \mathbb{C}^{L^2}$ be the vector of spherical Fourier coefficients ordered as 
\[
\hat{\mathbf{a}}^{\sf T} = \left(\hat{a}_{0,0}, \hat{a}_{1,0}, \hat{a}_{1,1}, \hat{a}_{1,-1}, \dots, \hat{a}_{L-1,-L+1}\right).
\] 
Then the discrete spherical Fourier transform can be expressed as the matrix--vector product
\begin{equation}\label{eqn:lemma1}
\hat{\mathbf{a}} = U\mathbf{x}, \qquad \text{with} \quad U := Y^{H} Q,
\end{equation}
where $Y$ is the spherical harmonics matrix and $Q$ the sample weights matrix as in Definitions~\ref{def:shmat} and~\ref{def:samplewmat}.
\end{lemma}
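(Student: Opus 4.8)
The plan is to prove \eqref{eqn:lemma1} by a direct entrywise computation, unwinding the definitions of $Y$, $Q$, and the absorbed quadrature weights, and then matching the result against the analysis formula \eqref{eqn:analysis}. Since both sides of \eqref{eqn:lemma1} are linear in $\mathbf{x}$, it suffices to verify that the $(\ell,m)$-th component of $U\mathbf{x}$ equals $\hat{a}_{\ell,m}$ for every admissible multi-index $(\ell,m)$; the real-valuedness of $f$ need not be invoked at this stage, since the matrix factorization $U = Y^{H}Q$ holds for arbitrary complex samples and is used later only through the induced symmetry constraints on $\hat{\mathbf{a}}$.

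First I would fix the indexing conventions. Rows of $Y$ (equivalently, components of $\mathbf{x}$) are labeled by the lexicographically ordered pair $(j,k)$ with $j=0,\dots,2L-1$ and $k=0,\dots,2L-2$, so that $[\mathbf{x}]_{(j,k)} = f(\theta_j,\phi_k)$, while columns of $Y$ (equivalently, components of $\hat{\mathbf{a}}$) are labeled by $(\ell,m)$ in the order fixed in \eqref{eq: spherical_harmonic_coeff}. With these conventions, Definition~\ref{def:shmat} gives $[Y^{H}]_{(\ell,m),(j,k)} = \overline{Y_{\ell m}(\theta_j,\phi_k)} = Y_{\ell m}^*(\theta_j,\phi_k)$, and Definition~\ref{def:samplewmat}, together with the Kronecker structure $Q = Q_\theta \otimes I_{N_\phi}$ and the fact that the longitude index is left untouched, gives $[Q]_{(j,k),(j',k')} = q_j\,\delta_{jj'}\delta_{kk'}$; that is, $Q$ is the diagonal matrix whose $(j,k)$-entry equals the absorbed weight $q_j = q_j^{(L)}$.

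Next I would compose the two matrices. Multiplying $Y^{H}$ on the right by the diagonal matrix $Q$ simply rescales the $(j,k)$-th column of $Y^{H}$ by $q_j$, so
\[
[U]_{(\ell,m),(j,k)} = \sum_{j',k'} [Y^{H}]_{(\ell,m),(j',k')}\,[Q]_{(j',k'),(j,k)} = q_j\, Y_{\ell m}^*(\theta_j,\phi_k).
\]
Contracting with $\mathbf{x}$ then yields
\[
[U\mathbf{x}]_{(\ell,m)} = \sum_{j=0}^{2L-1}\sum_{k=0}^{2L-2} q_j\, Y_{\ell m}^*(\theta_j,\phi_k)\, f(\theta_j,\phi_k),
\]
which is exactly the analysis formula for $\hat{a}_{\ell,m}$ once one recalls that, per the convention adopted in this section, the prefactor $\tfrac{2\pi}{2L-1}$ has been absorbed into $q_j^{(L)}$. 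This establishes $\hat{\mathbf{a}} = U\mathbf{x}$ with $U = Y^{H}Q$.

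The only point requiring genuine care, and the closest thing to an obstacle, is bookkeeping: one must check that the lexicographic flattening of the $(j,k)$ grid is consistent across $\mathbf{x}$, the rows of $Y$, and the Kronecker factorization of $Q$, and that the weight matrix is built from the rescaled weights $q_j^{(L)}$ (with $\tfrac{2\pi}{2L-1}$ absorbed) rather than the un-normalized ones. Once the indexing is pinned down the statement reduces to reading off matrix entries, with exactness of the quadrature for $L$-band-limited functions entering only implicitly, to justify that \eqref{eqn:analysis} genuinely recovers $\hat{a}_{\ell,m}$.
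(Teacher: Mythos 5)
Your proposal is correct and follows essentially the same route as the paper's proof: unwind the Kronecker/diagonal structure of $Q$ so that the $(j,k)$-th column of $Y^{H}$ is scaled by $q_j$, compute the $(\ell,m)$-th entry of $(Y^{H}Q)\mathbf{x}$, and identify it with the analysis formula with the prefactor $\tfrac{2\pi}{2L-1}$ absorbed into $q_j^{(L)}$. The extra attention you pay to the lexicographic indexing conventions is sound bookkeeping but adds nothing beyond the paper's argument.
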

\noindent The proof is available in Section \ref{sec:proofs}

Lemma~\ref{lemma:DFT_mat_rep} provides the matrix-vector counterpart of the analysis formula given in Equation~\eqref{eqn:analysis}. In this form, the operator $U$ acts as the spherical Fourier analysis matrix, combining the evaluation of spherical harmonics with the appropriate quadrature weights.

It is equally straightforward to derive the matrix-vector version of the synthesis formula corresponding to Equation~\eqref{eqn:synthesis}. Indeed, for any $L$-band-limited signal $\mathbf{x}$ with spectral coefficients $\hat{\mathbf{a}}$, one has
\begin{equation}
    \mathbf{x} = Y\hat{\mathbf{a}} .
\end{equation}

At first glance, this might suggest that $Y$ is the inverse of $U$. This conclusion is, however, incorrect. The matrix $U$ is not square: in fact,
\[
U \in \mathbb{C}^{\hat{d}_X \times d_X},
\qquad
\hat{d}_X = L^2,
\qquad
d_X = 2L(2L-1),
\]
so that $U$ maps the higher-dimensional spatial representation into the lower-dimensional spectral one.

We now clarify the precise algebraic relationship between the spherical DFT operator $U$ and the spherical harmonics matrix $Y$, and show how exact reconstruction is nevertheless achieved for band-limited signals despite the dimensional mismatch.

\begin{lemma}[Pseudoinverse of the spherical DFT operator $U$]
\label{lemma:pseudoinverseU}
Let $U \in \mathbb{C}^{\hat{d}_X \times d_X}$ be the spherical DFT operator as defined in Lemma~\ref{lemma:DFT_mat_rep}, let $Y \in \mathbb{C}^{d_X \times \hat{d}_X}$ be the spherical harmonics matrix (Definition~\ref{def:shmat}), and let $Q \in \mathbb{R}^{d_X \times d_X}$ be the sample weights matrix (Definition~\ref{def:samplewmat}). In the standard $L$-band-limited setting, $d_X = 2L(2L-1)$ and $\hat{d}_X = L^2$.
Then:
\begin{itemize}
    \item \textit{Right pseudoinverse:} The spherical harmonics matrix $Y$ is a right pseudoinverse of $U$,
    \begin{equation}
        U Y = I_{\hat{d}_X}.
    \end{equation}
    \item \textit{Left pseudoinverse:} Let $P := Y U$. Then for any sampled signal $\mathbf{x} \in \mathbb{R}^{d_X}$, the following are equivalent:
    \begin{enumerate}
        \item $\mathbf{x}$ is $L$-band-limited; that is, there exists $\hat{\mathbf{a}} \in \mathbb{C}^{\hat{d}_X}$ with $\mathbf{x} = Y \hat{\mathbf{a}}$;
        \item $P \mathbf{x} = \mathbf{x}$.
    \end{enumerate}
\end{itemize}
\end{lemma}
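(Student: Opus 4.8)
The plan is to establish the two claims separately, exploiting the exactness of the quadrature scheme of~\cite{mcewen12} for $L$-band-limited functions. For the right-pseudoinverse identity $UY = I_{\hat{d}_X}$, I would argue column by column: the $(\ell',m')$-th column of $Y$ is the vector of samples of the single spherical harmonic $Y_{\ell',m'}$ on the equiangular grid, and applying $U = Y^H Q$ to it reproduces the analysis formula~\eqref{eqn:analysis} evaluated on the function $f = Y_{\ell',m'}$. Since $Y_{\ell',m'}$ is itself $L$-band-limited (as $\ell' < L$), exact quadrature gives $\hat a_{\ell,m} = \langle Y_{\ell',m'}, Y_{\ell,m}\rangle = \delta_{\ell\ell'}\delta_{mm'}$ by orthonormality of the spherical harmonics. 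Hence the $(\ell',m')$-th column of $UY$ is the standard basis vector $e_{(\ell',m')}$, which is exactly $UY = I_{\hat{d}_X}$. Equivalently and more compactly, one can unwind $UY = Y^H Q Y$ and observe that $[Y^H Q Y]_{(\ell,m),(\ell',m')} = \sum_{j,k} q_j^{(L)} Y_{\ell m}^*(\theta_j,\phi_k) Y_{\ell'm'}(\theta_j,\phi_k)$ is precisely the quadrature approximation to $\langle Y_{\ell',m'}, Y_{\ell,m}\rangle$, which is exact because the integrand is a product of two harmonics of degree $< L$, hence $L$-band-limited; this yields the Kronecker delta directly.

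For the left-pseudoinverse characterization, I would prove $(1)\Rightarrow(2)$ and $(2)\Rightarrow(1)$. The implication $(2)\Rightarrow(1)$ is immediate: if $P\mathbf{x}=\mathbf{x}$ then $\mathbf{x} = Y(U\mathbf{x})$, so $\mathbf{x}$ is in the column space of $Y$, i.e. it is the sample vector of the band-limited function with coefficients $\hat{\mathbf{a}} = U\mathbf{x}$. For $(1)\Rightarrow(2)$, suppose $\mathbf{x} = Y\hat{\mathbf{a}}$ for some $\hat{\mathbf{a}}\in\mathbb{C}^{\hat{d}_X}$. Then
\[
P\mathbf{x} = YU Y\hat{\mathbf{a}} = Y(UY)\hat{\mathbf{a}} = Y I_{\hat{d}_X}\hat{\mathbf{a}} = Y\hat{\mathbf{a}} = \mathbf{x},
\]
using the right-pseudoinverse identity just established. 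This also shows $P = YU$ is idempotent ($P^2 = Y(UY)U = YU = P$) and that its range is exactly the space of band-limited sample vectors, which is the geometric content of the statement.

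The only genuine subtlety — and the step I would be most careful about — is the \emph{exactness} claim underpinning $UY = I$: one must verify that the quadrature rule with nodes $\theta_j, \phi_k$ and weights $q_j^{(L)}$ integrates products $Y_{\ell m}^* Y_{\ell' m'}$ \emph{exactly} whenever $\ell,\ell' < L$. The longitude sum is a discrete Fourier sum over $2L-1$ equispaced points and annihilates all nonzero frequencies with $|m - m'| < 2L-1$ (automatically satisfied here), forcing $m = m'$; the colatitude sum then reduces to a one-dimensional quadrature for associated Legendre products of combined degree $\le 2L-2$, for which the McEwen–Wiaux weights are designed to be exact~\cite{mcewen12,dh94}. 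I would cite this exactness property from Section~\ref{sec:background} rather than re-derive it. Everything else is linear algebra: no rank computations or dimension counting are needed beyond noting that $Y$ has full column rank $\hat{d}_X$ (which follows a posteriori from $UY = I$), so $P$ is a well-defined oblique projector onto the band-limited subspace.
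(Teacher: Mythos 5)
Your proposal is correct and follows essentially the same route as the paper: exactness of the quadrature yields $UY = Y^{H}QY = I_{\hat d_X}$, and the projector characterization then follows by elementary linear algebra. Your argument for $(2)\Rightarrow(1)$ — reading off $\hat{\mathbf a} = U\mathbf x$ directly from $\mathbf x = P\mathbf x = Y(U\mathbf x)$ — is in fact slightly more direct than the paper's, which first establishes $\operatorname{range}(P)=\operatorname{range}(Y)$; the only nit is that in your compact version the product $Y^{*}_{\ell m}Y_{\ell' m'}$ with $\ell,\ell'<L$ is band-limited at $2L-1$ rather than $L$, a point you correctly repair in your final paragraph by invoking exactness of the quadrature for combined degree $\le 2L-2$.
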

\noindent The proof is given in Section \ref{sec:proofs}

\begin{remark} 
The left pseudoinverse lemma provides a clear characterization of the band-limited subspace. The implication from band-limited $\mathbf{x}$ to $P\mathbf{x} = \mathbf{x}$ corresponds to the usual "analysis then synthesis recovers the signal" identity. Conversely, the implication from $P\mathbf{x} = \mathbf{x}$ to $\mathbf{x}$ being band-limited shows that $P$ acts as the orthogonal projector (with respect to the $Q$-weighted inner product $\langle u,v \rangle_Q = u^H Q v$) onto the band-limited subspace $\operatorname{range}(Y)$.
\end{remark}

Lemma~\ref{lemma:pseudoinverseU} shows that the spherical DFT operator $U$ admits a pseudoinverse and that band-limited signals are precisely characterized by the projector $P = Y U$. This immediately implies a fundamental property of $U$: it preserves inner products between the band-limited spatial space and the Fourier coefficient space, which we formalize in the following proposition.

\begin{proposition}[Isometry between the $L$-band-limited spherical signal space and the space of Fourier coefficients]
\label{prop:U_isom}

Let $U \in \mathbb{C}^{\hat{d}_X \times d_X}$ be the spherical DFT operator defined in Lemma~\ref{lemma:DFT_mat_rep}. Under the same assumptions as in Lemma~\ref{lemma:pseudoinverseU}, let $\langle \cdot , \cdot \rangle_Q$ denote the $Q$-weighted inner product, and let
\[
\left( C^{d_X}_{\mathrm{band\text{-}limited}}, \langle \cdot , \cdot \rangle_Q \right)
\]
be the space of $L$-band-limited signals $\mathbf{x}$, i.e., signals for which there exists a vector of spherical Fourier coefficients $\hat{\mathbf{a}} \in \mathbb{C}^{\hat{d}_X}$ such that $\mathbf{x} = Y \hat{\mathbf{a}}$.

\noindent Then the linear operator $U$ is an isometry between $\left( C^{d_X}_{\mathrm{band\text{-}limited}}, \langle \cdot , \cdot \rangle_Q\right)$ and $\left( \mathbb{C}^{\hat{d}_X}, \langle \cdot , \cdot \rangle\right)$.
\end{proposition}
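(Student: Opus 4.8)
The plan is to show that $U$ preserves the relevant inner products, i.e. that for all $\mathbf{x}, \mathbf{y} \in C^{d_X}_{\mathrm{band\text{-}limited}}$ we have $\langle U\mathbf{x}, U\mathbf{y}\rangle = \langle \mathbf{x}, \mathbf{y}\rangle_Q$, where the left-hand side is the standard Hermitian inner product on $\mathbb{C}^{\hat d_X}$ and the right-hand side is the $Q$-weighted inner product $\mathbf{x}^H Q \mathbf{y}$. Since $U$ is linear and its restriction to the band-limited subspace is injective (this follows from the left-pseudoinverse part of Lemma~\ref{lemma:pseudoinverseU}: if $U\mathbf{x} = 0$ and $\mathbf{x}$ is band-limited then $\mathbf{x} = P\mathbf{x} = Y U \mathbf{x} = 0$) and surjective onto $\mathbb{C}^{\hat d_X}$ (by the right-pseudoinverse identity $UY = I_{\hat d_X}$, every $\hat{\mathbf{a}}$ is the image of the band-limited signal $Y\hat{\mathbf{a}}$), establishing the inner-product identity is enough to conclude that $U$ is an isometry (in fact a unitary isomorphism) between the two spaces.

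The computation itself is short. First I would write any band-limited $\mathbf{x}$ as $\mathbf{x} = Y\hat{\mathbf{a}}$ and $\mathbf{y} = Y\hat{\mathbf{b}}$ with $\hat{\mathbf{a}}, \hat{\mathbf{b}} \in \mathbb{C}^{\hat d_X}$. Then, using $U = Y^H Q$ and the right-pseudoinverse identity $UY = Y^H Q Y = I_{\hat d_X}$ from Lemma~\ref{lemma:pseudoinverseU}, I compute
\[
\langle U\mathbf{x}, U\mathbf{y}\rangle
= (U\mathbf{x})^H (U\mathbf{y})
= \hat{\mathbf{a}}^H Y^H (Y^H Q)^H (Y^H Q) Y \hat{\mathbf{b}}.
\]
Here one must be slightly careful: $U\mathbf{x} = UY\hat{\mathbf{a}} = \hat{\mathbf{a}}$, so in fact $\langle U\mathbf{x}, U\mathbf{y}\rangle = \hat{\mathbf{a}}^H \hat{\mathbf{b}}$ directly. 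On the other side, $\langle \mathbf{x}, \mathbf{y}\rangle_Q = \mathbf{x}^H Q \mathbf{y} = \hat{\mathbf{a}}^H Y^H Q Y \hat{\mathbf{b}} = \hat{\mathbf{a}}^H I_{\hat d_X} \hat{\mathbf{b}} = \hat{\mathbf{a}}^H \hat{\mathbf{b}}$, again invoking $Y^H Q Y = I_{\hat d_X}$. The two expressions coincide, which is exactly the claimed isometry. I would also remark that $Q$ is real, diagonal, with strictly positive entries $q_j^{(L)}$ (the quadrature weights), so $\langle\cdot,\cdot\rangle_Q$ is genuinely a positive-definite inner product and the statement is not vacuous.

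The only real subtlety—and the step I would flag as deserving the most care—is the bookkeeping around which identity from Lemma~\ref{lemma:pseudoinverseU} is being used and on which space. The identity $Y^H Q Y = I_{\hat d_X}$ is precisely $UY = I_{\hat d_X}$, the right-pseudoinverse statement, and it holds unconditionally on all of $\mathbb{C}^{\hat d_X}$; by contrast $P = YU$ is only the identity on the band-limited subspace, and it is this restriction that is needed to know $U$ is injective there. Conflating the two, or trying to prove the proposition for arbitrary $\mathbf{x} \in \mathbb{C}^{d_X}$ (where it is false, since $U$ has a nontrivial kernel), is the natural pitfall. Once the restriction to $C^{d_X}_{\mathrm{band\text{-}limited}} = \operatorname{range}(Y)$ is made explicit, everything reduces to the single identity $Y^H Q Y = I_{\hat d_X}$ and a two-line algebraic manipulation. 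I would close by noting that, since both spaces have complex dimension $\hat d_X = L^2$, the isometry is automatically a surjective isometric isomorphism.
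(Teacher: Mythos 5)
Your proof is correct and follows essentially the same route as the paper's: represent band-limited signals as $\mathbf{x}=Y\hat{\mathbf{a}}$ and reduce everything to the right-pseudoinverse identity $UY=Y^{H}QY=I_{\hat d_X}$, showing both $\langle U\mathbf{x},U\mathbf{y}\rangle$ and $\langle\mathbf{x},\mathbf{y}\rangle_Q$ equal $\hat{\mathbf{a}}^{H}\hat{\mathbf{b}}$. Your added remarks on injectivity/surjectivity and positive-definiteness of $Q$ go slightly beyond what the paper proves but are consistent with it.
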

Unlike the Euclidean DFT, the spherical DFT is an isometry only after restriction to the band-limited subspace and with respect to a weighted inner product, a distinction that becomes crucial in the stochastic setting.
The proof is presented in Section \ref{sec:proofs}.

\subsection{Diffusion SDEs}

Having expressed the Fourier transform for spherical maps in matrix form, we now derive the diffusion SDEs for spherical signals in the frequency domain by applying the spherical DFT operator $U$ to the forward spatial-domain SDE, Equation~\eqref{eq:forwardSDE}. Since the forward SDE is driven by a standard Brownian motion $\mathbf{w}$,
it is crucial to characterize the image of $\mathbf{w}$ under the spherical
DFT operator $U$. We begin by formalizing the class of Gaussian processes
that naturally arise in this setting.
\begin{definition}[Spherical mirrored Brownian motion]
\label{def:spherical_mirrored_bm}
A $\mathbb{C}^{L^2}$-valued stochastic process $\mathbf{v}(t)$ is called a
\emph{spherical mirrored Brownian motion} if it satisfies:
\begin{itemize}
\item conjugate symmetry $\mathbf{v}_{(\ell,m)} = (-1)^m \mathbf{v}^*_{(\ell,-m)}$,
\item independent increments,
\item covariance structure $\mathbb{E}[\varphi(\mathbf{v}(t))\varphi(\mathbf{v}(t))^{\mathsf T}] = t\,\Sigma$,
\end{itemize}
where $\Sigma$ is the covariance matrix induced by the spherical DFT and $\varphi$
extracts the independent real degrees of freedom. The increments are independent in time, although the components may be correlated at fixed times.
\end{definition}
Although $\mathbf{v}(t)$ is complex-valued, its effective dimension is $L^2$, corresponding to the independent real degrees of freedom extracted by $\varphi$.

The following lemma shows that the spherical Fourier transform of a
standard real Brownian motion belongs to this class, and provides an
explicit characterization of its components and covariance structure.

\begin{lemma}[Image of Brownian motion under the spherical DFT]
\label{lemma:SphericalDFT_BM}

Let $\mathbf{x} \in \mathbb{R}^{d_X}$ be samples of a spherical signal with band limit $L$, and let $\mathbf{w}$ be a standard Brownian motion on $\mathbb{R}^{d_X}$. Then the transformed process 
\[
\mathbf{v} = U \mathbf{w}
\] 
is a continuous stochastic process satisfying the following properties:

\begin{enumerate}
    \item \textit{Symmetry.} For all multi-indices $(\ell,m)$, the components satisfy
    \[
        \mathbf{v}_{(\ell,m)} = (-1)^m \mathbf{v}_{(\ell,-m)}^*.
    \]
    \item \textit{Real Brownian motions.} For $0 \le \ell < L$, the rescaled components
    \[
        \mathbf{v}_{(\ell,0)}/\sqrt{2 C_{(\ell,0),(\ell,0)}}
    \]
    are standard real Brownian motions.
    \item \textit{Complex Brownian motions.} For $0 \le \ell < L$ and $0 < m < L$, the components can be written as
    \[
        \mathbf{v}_{(\ell,m)} = \sqrt{C_{(\ell,m),(\ell,m)}} \left( \tilde{\mathbf{w}}^1_{(\ell,m)} + i \tilde{\mathbf{w}}^2_{(\ell,m)} \right),
    \]
    where $\tilde{\mathbf{w}}^1_{(\ell,m)}$ and $\tilde{\mathbf{w}}^2_{(\ell,m)}$ are independent standard Brownian motions.
\end{enumerate}

Here, the factor $C_{(\ell,m),(\ell',m')}$ is defined as
\[
    C_{(\ell,m),(\ell',m')} := \frac{2L-1}{2} \, N_{\ell,m} N_{\ell',m'} \sum_{j=0}^{2L-1} q_j^2 P_{\ell,m}(\cos \theta_j) P_{\ell',m'}(\cos \theta_j).
\]
Note that $C_{(\ell,m),(\ell',m')}$ vanishes unless $m=m'$, so the covariance
structure is block-diagonal with respect to the azimuthal index $m$.
\end{lemma}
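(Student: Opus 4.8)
The plan is to compute directly, using the matrix representation $\mathbf{v} = U\mathbf{w}$ with $U = Y^H Q$ from Lemma~\ref{lemma:DFT_mat_rep}. Since $\mathbf{w}$ is a standard Brownian motion on $\mathbb{R}^{d_X}$, the process $\mathbf{v}$ is a linear image of a Gaussian process and hence a continuous Gaussian process with independent increments; the only content is to identify its covariance. The covariance of $\mathbf{v}(t)$ is $t\, U U^H$ (viewing $U$ as acting on real noise, so the relevant Gram matrix is $U\,\mathrm{Cov}(\mathbf{w})\,U^H = U U^H$ up to the time scaling). I would therefore first write out the $((\ell,m),(\ell',m'))$ entry of $U U^H$ explicitly: since $U_{(\ell,m),(j,k)} = q_j^{(L)}\, Y_{\ell m}^*(\theta_j,\phi_k)$, one gets
\[
[U U^H]_{(\ell,m),(\ell',m')} = \sum_{j,k} \big(q_j^{(L)}\big)^2 Y_{\ell m}^*(\theta_j,\phi_k) Y_{\ell' m'}(\theta_j,\phi_k).
\]
Plugging in $Y_{\ell m}(\theta,\phi) = N_{\ell,m} P_{\ell,m}(\cos\theta) e^{im\phi}$, the longitude sum $\sum_{k=0}^{2L-2} e^{i(m'-m)\phi_k}$ is a geometric sum over equally spaced nodes $\phi_k = 2\pi k/(2L-1)$, which evaluates to $(2L-1)$ if $m \equiv m' \pmod{2L-1}$ and $0$ otherwise. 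Since $|m|,|m'| < L$ we have $|m-m'| < 2L-1$, so this forces $m = m'$; this is exactly the block-diagonality claim. The remaining colatitude sum then produces precisely the stated quantity $C_{(\ell,m),(\ell',m')}$ (after bookkeeping of the constant $2\pi/(2L-1)$ that was absorbed into $q_j^{(L)}$, and the factor $(2L-1)/2$ appearing in the definition).

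Next I would translate this covariance computation into the three structural statements. For the symmetry claim~(1), I would not even need the covariance: it follows from the conjugate symmetry $Y_{\ell,-m} = (-1)^m Y_{\ell,m}^*$ of the spherical harmonics together with the fact that $\mathbf{w}$, hence each inner product $\sum_{j,k} q_j^{(L)} f_{jk} Y_{\ell m}^*(\theta_j,\phi_k)$ applied to the real vector $\mathbf{w}$, transforms the same way — i.e. $\mathbf{v}_{(\ell,-m)} = (-1)^m \mathbf{v}_{(\ell,m)}^*$ as an almost-sure identity of processes. For the $m=0$ components~(2): here $Y_{\ell,0}$ is real-valued, so $\mathbf{v}_{(\ell,0)} = \sum_{j,k} q_j^{(L)} N_{\ell,0} P_{\ell,0}(\cos\theta_j)\, \mathbf{w}_{jk}$ is a real linear functional of $\mathbf{w}$, hence a real Brownian motion; its variance at time $t$ is $t\, [U U^H]_{(\ell,0),(\ell,0)}$, and normalizing by the square root of that variance (which I would identify with $2 C_{(\ell,0),(\ell,0)}$ under the paper's normalization convention) yields a standard real Brownian motion. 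For the $m>0$ components~(3): I would split $\mathbf{v}_{(\ell,m)}$ into its real and imaginary parts, each a real linear functional of $\mathbf{w}$, compute that they have equal variance and are uncorrelated (the cross term involves $\sum_k \cos(m\phi_k)\sin(m\phi_k) = \tfrac12\sum_k \sin(2m\phi_k) = 0$, again a geometric-sum identity valid since $0 < 2m < 2(2L-1)$ but with $2m \not\equiv 0$; one must check the edge case $2m = 2L-1$ cannot occur since $2L-1$ is odd — it can, e.g. $L=2,m=?$ — so I would handle this carefully, noting $\sum_k \sin(2m\phi_k)=0$ holds whenever $2m \not\equiv 0 \pmod{2L-1}$, and if $2m \equiv 0$ then since $2L-1$ is odd this forces $m \equiv 0$, contradiction). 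Gaussianity plus equal variance plus zero correlation gives independence, and factoring out $\sqrt{C_{(\ell,m),(\ell,m)}}$ delivers the stated representation.

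The main obstacle I anticipate is purely bookkeeping rather than conceptual: reconciling the various normalization constants — the absorbed $2\pi/(2L-1)$ prefactor in $q_j^{(L)}$, the factor $(2L-1)/2$ in the definition of $C$, the factor of $2$ distinguishing the real-$m{=}0$ case from the complex-$m{>}0$ case (which comes from a complex Gaussian having variance split equally between real and imaginary parts), and the off-diagonal-in-$\ell$ terms $C_{(\ell,m),(\ell',m)}$ with $\ell \neq \ell'$ which are generally nonzero because the equiangular quadrature is \emph{not} exact for the product $P_{\ell,m}P_{\ell',m}$ once $\ell + \ell' \geq 2L-1$ — this last point is the genuinely non-Euclidean phenomenon and is worth emphasizing, since it is exactly the source of the non-isotropic, non-diagonal covariance advertised in the abstract. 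I would close by remarking that, were the quadrature exact for all these products (as it would be for a Gauss--Legendre rule of sufficiently high order relative to the band limit), $C$ would reduce to a multiple of the identity and one would recover the Euclidean-style isotropic picture; the equiangular grid of~\cite{mcewen12} deliberately does not, which is what makes the spectral formulation geometrically nontrivial.
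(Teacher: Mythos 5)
Your proposal is correct and follows essentially the same route as the paper's proof: identify $\mathbf{v}=U\mathbf{w}$ as a continuous Gaussian process with independent increments and covariance governed by $UU^{H}$ (equivalently the real/imaginary blocks of $U$), obtain the symmetry in (1) from the conjugate symmetry of the spherical harmonics acting on real noise, treat the $m=0$ components as real linear functionals of $\mathbf{w}$, and for $m>0$ check equal variances and vanishing cross-covariance of the real and imaginary parts before rescaling by $\sqrt{C_{(\ell,m),(\ell,m)}}$. The only difference is one of explicitness: you carry out the longitude geometric sums (and the $2m\not\equiv 0 \pmod{2L-1}$ edge case) that the paper compresses into ``a direct computation gives,'' which is a welcome but not conceptually distinct addition.
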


\noindent Hence, $\mathbf{v}$ is a spherical mirrored Brownian motion in the sense of
Definition~\ref{def:spherical_mirrored_bm}.

In contrast to the Euclidean setting, and analogously to the spectral
time-series case studied in~\cite{crabbe24}, the process $\mathbf{v}$ is no longer
a standard Brownian motion due to the redundancy imposed by conjugate symmetry, that is, the symmetry between the pairs of components $\mathbf{v}_{\ell,m}$ and $\mathbf{v}_{\ell,-m}$. Nevertheless, as shown below, it is possible to recover a standard Brownian motion structure by restricting attention to a subset of non-redundant components.

\begin{remark}[Densities and scores for constrained spherical Fourier coefficients]  
\label{rem:density_score_constrained}  
We now provide a technical derivation of the density and score on the constrained Fourier coefficient manifold, which underlies the reverse-time SDE, that is introduced below. 

As discussed above, the redundancy between certain components of the spherical DFT $\hat{\mathbf{a}} \in \mathbb{C}^{L^2}$ of $\mathbf{x} \in \mathbb{R}^{2L(2L-1)}$ must be taken into account to define a valid probability density $\hat{p}$ for spherical maps in the frequency domain. This redundancy implies that $\hat{p}$ is naturally defined on the submanifold
\[
\mathbb{C}_{\text{constr}}^{L^2} := \left\{\hat{\mathbf{a}} = (\hat{a}_{0,0}, \dots, \hat{a}_{L-1,L-1}) \;\text{s.t.}\; \hat{a}_{\ell,m} = (-1)^m \hat{a}_{\ell,-m}^*\right\} \subset \mathbb{C}^{L^2}.
\]  

Following \cite{crabbe24}, we define a coordinate chart $\varphi : \mathbb{C}_{\text{constr}}^{L^2} \mapsto \mathbb{R}^{L^2}$ by extracting the unconstrained part of $\hat{\mathbf{a}}$. Concretely, we concatenate the relevant real and imaginary parts of the spherical DFT with $m>0$:
\begin{equation}
\label{eq:phi_chart_remark}
\varphi[\hat{\mathbf{a}}] := \bigoplus_{\ell=0}^{L-1} \left( \Re(a_{\ell,0}),\; \left(\Re(a_{\ell,m}), \Im(a_{\ell,m})\right)_{m=1}^{\ell} \right),
\end{equation}
where $\bigoplus$ denotes vector concatenation. The components satisfy
\[
a_{\ell,0} = \varphi_{\ell^2}, \quad \Re(a_{\ell,m}) = \varphi_{\ell^2 + 2m-1}, \quad \Im(a_{\ell,m}) = \varphi_{\ell^2 + 2m}, \; m = 1,\dots,\ell.
\]

By symmetry, $\hat{\mathbf{a}}$ can be uniquely reconstructed from $\varphi[\hat{\mathbf{a}}]$, defining the inverse map $\varphi^{-1} : \mathbb{R}^{L^2} \mapsto \mathbb{C}_{\text{constr}}^{L^2}$ as 
\begin{equation}
\label{eq:phi_inv_remark}
\varphi^{-1}(\mathbf{z})
=
\bigoplus_{\ell=0}^{L-1}
\left(
\mathbf{z}_{\ell^2},
\;
\left(
\mathbf{z}_{\ell^2+2m-1} + i\mathbf{z}_{\ell^2+2m},
(-1)^m\left(\mathbf{z}_{\ell^2+2m-1} - i\mathbf{z}_{\ell^2+2m}\right)
\right)_{m=1}^{\ell}
\right).
\end{equation}
With this chart, we define a density on the real vector space $\hat{p}_\varphi : \mathbb{R}^{L^2} \mapsto \mathbb{R}^+$ and pull it back to the constrained manifold:  
\[
\hat{p} := \hat{p}_\varphi \circ \varphi.  
\]  
This ensures $\hat{p}$ depends only on the independent real and imaginary components of $\hat{\mathbf{a}}$ and respects the mirror symmetry.  

Finally, the score $\hat{\mathbf{s}} : \mathbb{C}_{\text{constr}}^{L^2} \times [0,T] \mapsto \mathbb{C}^{L^2}$ is constructed analogously. First define the real score on $\mathbb{R}^{L^2}$:
\[
\hat{\mathbf{s}}_\varphi(\mathbf{z},t) = \nabla_{\mathbf{z}} \log \hat{p}_{\varphi,t}(\mathbf{z}), \quad \mathbf{z} \in \mathbb{R}^{L^2}, t \in [0,T],
\]  
and pull it back to the constrained manifold:
\begin{equation}
\label{eq:score_freq}
\hat{\mathbf{s}}(\hat{\mathbf{a}},t) := \varphi^{-1}\big[ \hat{\mathbf{s}}_\varphi(\varphi(\hat{\mathbf{a}}),t)\big], \quad \hat{\mathbf{a}} \in \mathbb{C}_{\text{constr}}^{L^2}.
\end{equation}

This defines a vector field of partial derivatives with respect to the independent real and imaginary parts of $\hat{\mathbf{a}}$ and is fully consistent with the symmetry constraints. It thus provides the rigorous foundation for the score term appearing in the reverse-time SDE, Equation~(\ref{eq:ReverseSDE_Frequency}).  
\end{remark}

Once a probabilistic model and a well-defined score function for complex-valued frequency coefficients have been introduced, we are in a position to formulate diffusion SDEs directly in the spherical Fourier domain. In particular, we leverage Lemma~\ref{lemma:SphericalDFT_BM} to characterize the dynamics of the spherical Fourier coefficients $\hat{\mathbf{a}}$ as SDEs driven by spherical mirrored Brownian motions.

\begin{theorem}[Diffusion process in the frequency domain] 
\label{thm:DiffusionFrequency}
Let $\mathbf{x}$ be a diffusion process solving Equation~\eqref{eq:forwardSDE} and $\mathbf{G}(t) = g(t) I_{\hat{d}_X}$. Then its spherical Fourier transform $\hat{\mathbf{a}} = \mathcal{F}[\mathbf{x}]$ satisfies the forward diffusion SDE in the frequency domain:
\begin{equation}
\label{eq:ForwardSDE_Frequency}
    d\hat{\mathbf{a}}_t = \hat{\mathbf{f}}(\hat{\mathbf{a}}_t,t) dt + g(t) d\tilde{\mathbf{v}}_t,
\end{equation}
where $\hat{\mathbf{f}}(\hat{\mathbf{a}},t) = U \mathbf{f}(Y\hat{\mathbf{a}},t)$, and $\tilde{\mathbf{v}} = U \mathbf{w}$ is a spherical mirrored Brownian motion on $\mathbb{C}^{L^2}$.

The associated reverse-time diffusion process is
\begin{equation}
\label{eq:ReverseSDE_Frequency}
    d\hat{\mathbf{a}}_t = \hat{\mathbf{b}}(\hat{\mathbf{a}}_t,t) dt + g(t) d\tilde{\mathbf{v}}_t,
\end{equation}
where
\[
    \hat{\mathbf{b}}(\hat{\mathbf{a}}_t,t) = \hat{\mathbf{f}}(\hat{\mathbf{a}}_t,t) - g^2(t)  \Sigma \, \hat{\mathbf{s}}(\hat{\mathbf{a}}_t,t),
\]
$\Sigma \in \mathbb{R}^{L^2 \times L^2}$ is the covariance matrix of the spherical
mirrored Brownian motion expressed in the real coordinate system induced by $\varphi$, $dt$ is a negative infinitesimal time step, and $\tilde{\mathbf{v}}_t$ is a spherical mirrored Brownian motion on $\mathbb{C}^{L^2}$ with time running backward from $T$ to $0$.
\end{theorem}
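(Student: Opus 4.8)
\emph{Proof plan.} The plan is to transport the spatial SDE~\eqref{eq:forwardSDE} through the linear operator $U$ of Lemma~\ref{lemma:DFT_mat_rep}, and then apply the classical time-reversal formula of~\cite{anderson82} after a linear change of coordinates that turns the constrained spectral noise into a genuine (anisotropic) Brownian motion on $\mathbb{R}^{L^2}$.

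For the forward equation, I would first observe that the map $\mathbf{x}\mapsto U\mathbf{x}$ is linear with constant (deterministic) coefficients, so by linearity of the Itô stochastic integral $\hat{\mathbf{a}}(t) = U\mathbf{x}(0) + \int_0^t U\mathbf{f}(\mathbf{x}(s),s)\,ds + \int_0^t g(s)\,U\,d\mathbf{w}(s)$, i.e.\ $d\hat{\mathbf{a}} = U\mathbf{f}(\mathbf{x},t)\,dt + g(t)\,U\,d\mathbf{w}$, with no second-order correction. Using that the process remains in the $L$-band-limited subspace (so that $\mathbf{x} = Y\hat{\mathbf{a}}$ by the synthesis identity and Lemma~\ref{lemma:pseudoinverseU}), the drift becomes $U\mathbf{f}(Y\hat{\mathbf{a}},t) = \hat{\mathbf{f}}(\hat{\mathbf{a}},t)$; by Lemma~\ref{lemma:SphericalDFT_BM}, $\tilde{\mathbf{v}} := U\mathbf{w}$ is a spherical mirrored Brownian motion in the sense of Definition~\ref{def:spherical_mirrored_bm}. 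This establishes~\eqref{eq:ForwardSDE_Frequency}.

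For the reverse equation, I would pass to the real coordinate chart $\varphi$ of Remark~\ref{rem:density_score_constrained}. Since $\varphi$ and $\varphi^{-1}$ are linear isomorphisms between $\mathbb{C}_{\text{constr}}^{L^2}$ and $\mathbb{R}^{L^2}$, applying $\varphi$ to~\eqref{eq:ForwardSDE_Frequency} yields $d\mathbf{z} = \varphi[\hat{\mathbf{f}}](\mathbf{z},t)\,dt + g(t)\,d\big(\varphi[\tilde{\mathbf{v}}]\big)$ with $\mathbf{z} = \varphi[\hat{\mathbf{a}}]$; by Definition~\ref{def:spherical_mirrored_bm} the process $\varphi[\tilde{\mathbf{v}}]$ is Gaussian with independent increments and covariance $t\,\Sigma$, hence a Brownian motion on $\mathbb{R}^{L^2}$ with covariance matrix $\Sigma$, which we write $\varphi[\tilde{\mathbf{v}}](t) = \Sigma^{1/2}\mathbf{b}(t)$ for a standard Brownian motion $\mathbf{b}$. (Here I would record as a preliminary that $\Sigma$ is positive definite on the $L^2$ real degrees of freedom, which follows from the block-diagonal Gram structure of the matrix $C$ in Lemma~\ref{lemma:SphericalDFT_BM} together with positivity of the quadrature weights and linear independence of the associated Legendre functions on the grid.) The forward SDE on $\mathbb{R}^{L^2}$ thus has diffusion matrix $G(t) = g(t)\Sigma^{1/2}$ and $G(t)G(t)^{\mathsf T} = g^2(t)\Sigma$. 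Applying~\cite{anderson82} in the form already recalled in~\eqref{eq:revdif} (with $G$ depending only on $t$, so that the divergence term vanishes, and under the standard integrability conditions ensuring existence of the time reversal) gives the reverse-time SDE $d\mathbf{z} = \big[\varphi[\hat{\mathbf{f}}] - g^2(t)\,\Sigma\,\nabla_{\mathbf{z}}\log \hat{p}_{\varphi,t}(\mathbf{z})\big]\,dt + g(t)\,\Sigma^{1/2}\,d\bar{\mathbf{b}}$, with time running from $T$ to $0$. Pulling back by the linear map $\varphi^{-1}$ and using $\nabla_{\mathbf{z}}\log\hat{p}_{\varphi,t} = \hat{\mathbf{s}}_\varphi$ together with $\hat{\mathbf{s}} = \varphi^{-1}[\hat{\mathbf{s}}_\varphi]$ (Remark~\ref{rem:density_score_constrained}), the drift becomes $\hat{\mathbf{f}}(\hat{\mathbf{a}},t) - g^2(t)\,\Sigma\,\hat{\mathbf{s}}(\hat{\mathbf{a}},t)$, with $\Sigma$ understood to act in the $\varphi$-coordinates, while $\check{\mathbf{v}} := \varphi^{-1}[\Sigma^{1/2}\bar{\mathbf{b}}]$ is again a spherical mirrored Brownian motion (conjugate-symmetric by construction of $\varphi^{-1}$, independent increments inherited from $\bar{\mathbf{b}}$, covariance $t\,\Sigma$), with time reversed. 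This yields~\eqref{eq:ReverseSDE_Frequency}.

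The main obstacle is that the spectral driving noise $U\mathbf{w}$ is neither standard nor full-dimensional in $\mathbb{C}^{L^2}$: it is a correlated Gaussian process supported on the lower-dimensional conjugate-symmetric submanifold, so Anderson's theorem cannot be invoked directly on $\mathbb{C}^{L^2}$. The essential device is therefore the linear chart $\varphi$, which identifies this submanifold with $\mathbb{R}^{L^2}$ and renders the noise an honest anisotropic Brownian motion with non-degenerate covariance $\Sigma$; one must then carefully track how the drift, the score, and the $GG^{\mathsf T}$ correction transform under $\varphi$ and $\varphi^{-1}$, and in particular verify that $GG^{\mathsf T} = g^2(t)\Sigma$ is exactly the matrix multiplying the score. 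A secondary point to dispatch cleanly is the band-limitedness used to identify $U\mathbf{f}(\mathbf{x},t)$ with $\hat{\mathbf{f}}(\hat{\mathbf{a}},t) = U\mathbf{f}(Y\hat{\mathbf{a}},t)$: since the ambient Brownian motion $\mathbf{w}$ in principle drives $\mathbf{x}$ out of the band-limited subspace, the statement is cleanest when the forward dynamics are understood as confined to that subspace (equivalently, only $P\mathbf{x} = YU\mathbf{x}$ affects $\hat{\mathbf{a}}$), which holds automatically whenever $\mathbf{f}$ is linear in $\mathbf{x}$, as for the standard variance-preserving and variance-exploding choices.
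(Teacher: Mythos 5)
Your proposal is correct and follows essentially the same route as the paper's proof: the forward equation via linearity of $U$ and It\^o's lemma, and the reverse equation by passing to the real chart $\varphi$, factoring the covariance (you use $\Sigma^{1/2}$ where the paper allows any $\Lambda$ with $\Lambda\Lambda^{\mathsf T}=\Sigma$), applying Anderson's time-reversal formula there, and pulling back through the linear map $\varphi^{-1}$ to obtain the drift $\hat{\mathbf{f}}-g^2(t)\,\Sigma\,\hat{\mathbf{s}}$ and the mirrored noise $\check{\mathbf{v}}$. Your additional remarks (positive definiteness of $\Sigma$ and the band-limitedness caveat needed to write $U\mathbf{f}(\mathbf{x},t)=U\mathbf{f}(Y\hat{\mathbf{a}},t)$) are not in the paper's proof but are harmless refinements rather than deviations.
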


Theorem~\ref{thm:DiffusionFrequency} provides a practical recipe for implementing
diffusion models in the frequency domain for spherical signals.
The proof is given in Section~\ref{sec:proofs}.
It shows that the score-based SDE formalism introduced in
\cite{Song_NEURIPS2020,song20} extends naturally to this setting, with one important
modification: the standard Brownian motion driving the diffusion is replaced by a
\emph{spherical mirrored Brownian motion}.

\begin{remark}[Covariance structure and effective noise normalization]
\label{rem:SigmaLambda}

Lemma~\ref{lemma:SphericalDFT_BM} shows that the noise term
\[
\tilde{\mathbf{v}}(t) = U \mathbf{w}(t)
\]
appearing in Theorem~\ref{thm:DiffusionFrequency} is not a standard Brownian motion on
$\mathbb{C}^{L^2}$, but a centered Gaussian process with nontrivial covariance.
More precisely, 
the covariance operator of $\varphi(\tilde{\mathbf{v}}(t))$ is given by 

\[
\mathbb{E}\!\left[\varphi(\tilde{\mathbf{v}}(t))\varphi(\tilde{\mathbf{v}}(t))^{\mathsf T}\right]
= t\,\Sigma,
\qquad \text{with }
\Sigma\in \mathbb{R}^{L^2 \times L^2}.
\]

In particular, due to discretization effects induced by the spherical quadrature,
the coefficients $C_{(\ell,m),(\ell',m)}$ entering $\Sigma$ need not vanish for
$\ell \neq \ell'$. Consequently, the covariance matrix $\Sigma$ is block-structured
with respect to the index $m$, but is in general non-diagonal in $\ell$, as can be seen in Fig. \ref{fig:cov-phi-Uw}.

The covariance matrix $\Sigma$ is defined with respect to the real coordinate system
induced by the chart $\varphi$, which extracts the independent real degrees of freedom
of the spherical Fourier coefficients. With respect to these coordinates, the entries
of $\Sigma$ take the form
\begin{equation}
\label{eq: Sigma cov matrix}
   \Sigma_{(\ell,m),(\ell',m')}
=
\begin{cases}
2\,C_{(\ell,0),(\ell',0)}, & m = m' = 0,\\[4pt]
C_{(\ell,m),(\ell',m)}, & m = m' > 0,\\[4pt]
0, & \text{otherwise}.
\end{cases} 
\end{equation}

Here, for $m>0$, the real and imaginary parts of $a_{\ell,m}$ correspond to distinct
coordinates that share the same covariance coefficient $C_{(\ell,m),(\ell',m)}$.

As a consequence, $\Sigma$ is a real, symmetric, and positive semidefinite matrix
encoding the full covariance structure induced by the spherical harmonic basis,
the quadrature scheme, and the conjugate symmetry constraints.

To express the diffusion SDE in the standard score-based form, it is convenient to
introduce a matrix $\Lambda \in \mathbb{R}^{L^2 \times L^2}$ such that
\begin{equation}
\label{eq:SigmaFactorization}
\Sigma = \Lambda \Lambda^{\mathsf T}.
\end{equation}
The factorization~\eqref{eq:SigmaFactorization} is not unique; any fixed choice
(for example, a Cholesky factor) is admissible and is kept constant throughout the diffusion
process.

With this notation, the noise term admits the representation
\[
\varphi[\tilde{\mathbf{v}}(t)] \;\overset{d}{=}\; \Lambda \tilde{\mathbf{w}}(t),
\]
where $\tilde{\mathbf{w}}$ is a standard Brownian motion on $\mathbb{R}^{L^2}$.
Consequently, the forward diffusion SDE in the frequency domain may be equivalently
written as
\[
d\hat{\mathbf{a}}_t
=
\hat{\mathbf{f}}(\hat{\mathbf{a}}_t,t)\,dt
+
g(t)\,\Lambda\,d\tilde{\mathbf{w}}_t,
\]
and the reverse-time drift involves the term
\[
g^2(t)\,\Sigma\,\hat{\mathbf{s}}(\hat{\mathbf{a}}_t,t),
\]
where the score $\hat{\mathbf{s}}$ is taken with respect to the law of
$\hat{\mathbf{a}}$ under this non-isotropic noise.

This formulation makes explicit that the fundamental difference from the Euclidean
score-based diffusion framework of~\cite{song20} lies in the covariance structure
of the driving noise, which here reflects the spectral symmetries and redundancy
inherent to real-valued spherical signals.
\end{remark}

\begin{remark}[Consistency with reverse-time theory]
The forward spatial-domain SDE~\eqref{eq:forwardSDE} is a linear diffusion with
additive, state-independent noise. Since the spherical DFT operator $U$ is linear
and deterministic, applying $U$ to the forward SDE yields a forward diffusion
process in the frequency domain whose driving noise is the Gaussian process
$\tilde{\mathbf{v}}(t) = U\mathbf{w}(t)$.

Although $\tilde{\mathbf{v}}(t)$ takes values in a complex vector space and its
natural covariance is given by $\mathbb{E}[\tilde{\mathbf{v}}(t)\tilde{\mathbf{v}}(t)^{H}]$,
the diffusion is most conveniently described in the real coordinate system
induced by the chart $\varphi$, which extracts the independent real degrees of
freedom of the spherical Fourier coefficients.

With respect to these real coordinates, the noise process $\varphi(\tilde{\mathbf{v}}(t))$
has independent increments and deterministic covariance
\[
\mathbb{E}\!\left[\varphi(\tilde{\mathbf{v}}(t))\,\varphi(\tilde{\mathbf{v}}(t))^{\mathsf T}\right]
= t\,\Sigma,
\]
where the matrix $\Sigma$ is given by~\eqref{eq: Sigma cov matrix}.
The resulting frequency-domain diffusion therefore falls within the class of
linear diffusions with possibly degenerate, non-isotropic covariance considered
in the reverse-time theory of~\cite{anderson82}.

Applying Anderson’s formula in the real coordinate system defined by $\varphi$
yields the reverse-time drift correction term
$g^2(t)\,\Sigma\,\hat{\mathbf{s}}(\hat{\mathbf{a}},t)$.
\end{remark}

\subsection{Relationship between Score Matching Losses}
\label{subsec:losses}
The final step is to define an appropriate loss in both the spatial and frequency domains and examine their relationship.  

In the spatial domain, Proposition~\ref{prop:U_isom} justifies the following score matching objective:
\begin{align}
\label{eq: score_matching loss spatial domain}
    &\theta^* = \underset{\theta \in \Theta}{\mathrm{argmin}} \, \mathbb{E}_{t, \mathbf{x}(0), \mathbf{x}(t)} 
    \left[ \mathcal{L}_{SM}\left(s_{\theta}, s_{t|0}, \mathbf{x}, t \right) \right],\\
    &\mathcal{L}_{SM}\left(s_{\theta}, s_{t|0}, \mathbf{x}, t \right) := \left\| s_{\theta}(\mathbf{x},t)-s_{t|0}(\mathbf{x},t)\right\|_Q^2,
\end{align}
where the $Q$-norm $\|\cdot\|_Q$ accounts for the spherical topology.

In the frequency domain, the reverse diffusion process from Equation~\eqref{eq:ReverseSDE_Frequency} allows sampling of spherical signals via $\hat{\mathbf{b}}(\hat{\mathbf{a}},t)$, which involves the unknown score $\hat{\mathbf{s}}$. Analogously, we define an approximating score $\hat{\mathbf{s}}_{\hat{\theta}}$ and optimize the corresponding frequency-domain objective:
\begin{equation}
\label{eq: score_matching loss frequency spherical case}
\begin{split}
 &   \hat{\theta}^* = \underset{\hat{\theta} \in \Theta}{\mathrm{argmin}}  \mathbb{E}_{t, \hat{\mathbf{a}}(0), \hat{\mathbf{a}}(t)} 
    \left[ \mathcal{L}_{SM}\left(\hat{s}_{\hat{\theta}}, \Sigma  \hat{s}_{t|0}, \hat{\mathbf{a}}, t \right) \right],\\
&    \mathcal{L}_{SM}\left(\hat{s}_{\hat{\theta}}, \Sigma  \hat{s}_{t|0}, \hat{\mathbf{a}}, t \right) := \left\|\hat{s}_{\hat{\theta}}(\hat{\mathbf{a}},t)-\Sigma  \hat{s}_{t|0}(\hat{\mathbf{a}},t)\right\|_2^2,\end{split}
\end{equation}
with $t \sim \mathcal{U}(0,T)$, $\hat{\mathbf{a}}(0) \sim \hat{\mathbf{p}}_0$, $\hat{\mathbf{a}}(t) \sim \hat{\mathbf{p}}_{t|0}(\cdot \vert \hat{\mathbf{a}}(0))$, and $\Sigma$ the matrix from Theorem~\ref{thm:DiffusionFrequency}.  
In practice, this involves computing frequency representations of spherical maps, sampling from $\hat{p}_{t|0}$ via Equation~\eqref{eq:ForwardSDE_Frequency}, and then mapping the resulting complex signals back to the spatial domain using $\mathcal{F}^{-1}$.

A natural question arises: why does minimizing the frequency-domain loss in Equation~\eqref{eq: score_matching loss frequency spherical case} imply that $\hat{p}_0 \approx \hat{p}_{\text{data}}$?  
The key idea is to associate to $\hat{\mathbf{s}}_{\hat{\theta}}$ an auxiliary spatial-domain score
\[
s'_{\hat{\theta}}(\mathbf{x},t) := Y \hat{s}_{\hat{\theta}}(U\mathbf{x},t),
\]
where $Y$ and $U$ are the isometries from Proposition~\ref{prop:U_isom}. Unlike the Euclidean time-series setting of~\cite{crabbe24}, the spherical geometry and the induced noise covariance prevent an exact equivalence between spatial and frequency domain score matching, leading instead to an inequality that highlights a geometry-induced inductive bias.

\begin{theorem}[Score matching losses in spatial and frequency domains]
\label{thm:main}
Let $\hat{s}_{\hat{\theta}} : \mathbb{C}^{L^2} \times [0,T] \mapsto \mathbb{C}^{L^2}$ satisfy the spherical mirror symmetry 
$[\hat{s}_{\hat{\theta}}]_{(\ell,m)} = (-1)^m[\hat{s}_{\hat{\theta}}^*]_{(\ell,-m)}$ for all $0\leq \ell < L$ and $|m|< L$.  
Define the auxiliary spatial-domain score $s'_{\hat{\theta}} : \mathbb{R}^{2L(2L-1)} \times [0,T] \mapsto \mathbb{R}^{2L(2L-1)}$ as above. Then
\begin{equation}\label{eq:score_matching bound}
\begin{split}
\mathcal{L}_{SM}\left(\hat{s}_{\hat{\theta}}, \Sigma\hat{s}_{t|0}, \hat{\mathbf{a}}, t\right) 
  &  \leq 2
    \left\{ 
    \mathcal{L}_{SM}\left(s'_{\hat{\theta}}, s_{t|0}, \mathbf{x}, t\right)\right.\\& + \left.
    \left\| 
    U Z \Sigma\nabla_{\phi[\hat{\mathbf{a}}(t)]}
    \log p_{t|0}\left( \phi[\hat{\mathbf{a}}(t)]  \big\vert  \phi[\hat{\mathbf{a}}(0)] \right)
    \right\|^2
    \right\},
    \end{split}
\end{equation}
where 
\begin{equation*}
\begin{split}
& \hat{s}_{t|0}(\hat{\mathbf{a}}, t) = \nabla_{\hat{\mathbf{a}}(t)} \log \hat{p}_{t|0}\left( \hat{\mathbf{a}}(t)  \big \vert  \hat{\mathbf{a}}(0)\right),\\  
& s_{t|0}(\mathbf{x},t) = \nabla_{\mathbf{x}(t)} \log p_{t|0}\left( \mathbf{x}(t)\vert\mathbf{x}(0)\right),
\end{split}
\end{equation*}
and $\Sigma$ is as in Theorem~\ref{thm:DiffusionFrequency}.
\end{theorem}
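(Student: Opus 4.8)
The plan is to reduce the frequency-domain objective to its spatial counterpart using the matrix identities of Lemma~\ref{lemma:pseudoinverseU} and the partial isometry of Proposition~\ref{prop:U_isom}, isolating a residual term that measures the obstruction to exact equivalence. I work pointwise, with $\mathbf{x}$ and $\hat{\mathbf{a}}$ related by the natural coupling $\mathbf{x} = Y\hat{\mathbf{a}}$ (so $\mathbf{x}$ is band-limited and $\hat{\mathbf{a}} = U\mathbf{x}$ by $UY = I_{\hat{d}_X}$); the loss-level statement then follows by taking expectations. Since $s'_{\hat{\theta}}(\mathbf{x},t) = Y\hat{s}_{\hat{\theta}}(U\mathbf{x},t)$, left-multiplication by $U$ together with $UY = I_{\hat{d}_X}$ gives $Us'_{\hat{\theta}}(\mathbf{x},t) = \hat{s}_{\hat{\theta}}(\hat{\mathbf{a}},t)$, so the left-hand side of~\eqref{eq:score_matching bound} equals $\|Us'_{\hat{\theta}} - \Sigma\hat{s}_{t|0}\|_2^2$. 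I would then insert the $U$-image of the spatial denoising target, writing $Us'_{\hat{\theta}} - \Sigma\hat{s}_{t|0} = U(s'_{\hat{\theta}} - s_{t|0}) + \big(Us_{t|0} - \Sigma\hat{s}_{t|0}\big)$ and applying $\|a+b\|_2^2 \le 2\|a\|_2^2 + 2\|b\|_2^2$. This is exactly where the constant $2$ and the two-term structure of~\eqref{eq:score_matching bound} come from.

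\textbf{First term.} To bound $\|U(s'_{\hat{\theta}} - s_{t|0})\|_2^2$ by $\mathcal{L}_{SM}(s'_{\hat{\theta}}, s_{t|0}, \mathbf{x}, t) = \|s'_{\hat{\theta}} - s_{t|0}\|_Q^2$, I use that $U$ annihilates the $Q$-orthogonal complement of the band-limited subspace: from $P = YU$ and $UY = I_{\hat{d}_X}$ we get $UP = U$, hence $U(I-P) = 0$. Therefore $U(s'_{\hat{\theta}} - s_{t|0}) = U(s'_{\hat{\theta}} - Ps_{t|0})$, and $s'_{\hat{\theta}} - Ps_{t|0}$ lies in $\operatorname{range}(Y)$, i.e.\ is $L$-band-limited. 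Proposition~\ref{prop:U_isom} then converts the $2$-norm on $\mathbb{C}^{\hat{d}_X}$ into the $Q$-norm: $\|U(s'_{\hat{\theta}} - Ps_{t|0})\|_2^2 = \|s'_{\hat{\theta}} - Ps_{t|0}\|_Q^2$. Since $P$ is the $Q$-orthogonal projector onto $\operatorname{range}(Y)$ (remark following Lemma~\ref{lemma:pseudoinverseU}) and $s'_{\hat{\theta}}\in\operatorname{range}(Y)$, the Pythagorean identity $\|s'_{\hat{\theta}} - s_{t|0}\|_Q^2 = \|s'_{\hat{\theta}} - Ps_{t|0}\|_Q^2 + \|(I-P)s_{t|0}\|_Q^2$ yields $\|s'_{\hat{\theta}} - Ps_{t|0}\|_Q^2 \le \|s'_{\hat{\theta}} - s_{t|0}\|_Q^2$, which is the first term on the right of~\eqref{eq:score_matching bound}.

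\textbf{Second term (the residual).} The term $\|Us_{t|0} - \Sigma\hat{s}_{t|0}\|_2^2$ has no analogue in the Euclidean time-series case of~\cite{crabbe24}, where Parseval makes the spatial and frequency losses coincide; here it records the combined effect of the anisotropy of $\Sigma$ and of the passage between the full spatial space $\mathbb{R}^{d_X}$ and the $L^2$-dimensional constrained coefficient manifold. I would evaluate everything in the real chart $\varphi$ of Remark~\ref{rem:density_score_constrained} (written $\phi$ in the statement), with $Z$ the linear realization of $\varphi^{-1}:\mathbb{R}^{L^2}\to\mathbb{C}^{L^2}$, and use that, along the coupling $\varphi(\hat{\mathbf{a}}(t)) = (\varphi\circ U)\mathbf{x}(t)$, the frequency transition kernel $\hat{p}_{t|0}$ is the pushforward of the spatial kernel $p_{t|0}$ under $\varphi\circ U$, the image of a standard Gaussian under that map having covariance $\Sigma = \Lambda\Lambda^{\mathsf T}$ (Lemma~\ref{lemma:SphericalDFT_BM}, Remark~\ref{rem:SigmaLambda}). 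Applying the chain rule for the score under this (non-injective) linear change of variables and simplifying with $UY = I_{\hat{d}_X}$, the difference $Us_{t|0} - \Sigma\hat{s}_{t|0}$ reduces to $U Z\,\Sigma\,\nabla_{\phi[\hat{\mathbf{a}}(t)]}\log p_{t|0}(\phi[\hat{\mathbf{a}}(t)] \mid \phi[\hat{\mathbf{a}}(0)])$, which is the second term on the right of~\eqref{eq:score_matching bound}. Combining the two estimates and taking expectations over $t,\hat{\mathbf{a}}(0),\hat{\mathbf{a}}(t)$ (equivalently over the coupled $\mathbf{x}(0),\mathbf{x}(t)$) yields the claim.

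\textbf{Main obstacle.} The delicate point is the bookkeeping in the second step: three metric structures are simultaneously in play — the $Q$-weighted inner product on $\mathbb{R}^{d_X}$, the standard inner product on $\mathbb{C}^{L^2}\cong\mathbb{R}^{L^2}$ carried by $\varphi$, and the $\Sigma$-weighting of the spectral noise — and the operators $U$, $Y$, $\varphi$ linking them are non-square. One must keep careful track of the facts that $U$ is only a \emph{partial} isometry (valid on $\operatorname{range}(Y)$ and only for $\langle\cdot,\cdot\rangle_Q$), that $P = YU$ is the corresponding $Q$-orthogonal projector, and that $\hat{p}_{t|0}$ is a marginal, anisotropic Gaussian rather than an isotropic one. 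Precisely the two features that break the Euclidean equivalence of~\cite{crabbe24} — spatial noise components lying outside the band limit (absorbed by $P$) and the non-isotropic spectral covariance (the factor $\Sigma$ in the denoising target) — are what force both the constant $2$ and the residual term, so neither can be discarded; getting the residual into exactly the stated form, with $U$, $Z$, $\Sigma$ in the right positions, is the part requiring care. The remaining ingredients — the triangle inequality, the Pythagorean estimate, and the isometry — are then routine.
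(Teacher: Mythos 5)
Your overall strategy coincides with the paper's: along the coupling $\hat{\mathbf{a}}=U\mathbf{x}$ one has $\hat{s}_{\hat\theta}=Us'_{\hat\theta}$, and your add-and-subtract split $Us'_{\hat\theta}-\Sigma\hat{s}_{t|0}=U(s'_{\hat\theta}-s_{t|0})+(Us_{t|0}-\Sigma\hat{s}_{t|0})$ is exactly the paper's decomposition, since the paper's first block $\hat{s}_{\hat\theta}-UT^{+}\Sigma\nabla_{\varphi}\log\hat{p}_{t|0}$ equals $U(s'_{\hat\theta}-s_{t|0})$. Your treatment of the first term is correct and is a legitimate variant: instead of the paper's direct estimate $\|U\mathbf{v}\|_2^2\le\|\mathbf{v}\|_Q^2$ (proved via $A=UQ^{-1/2}$ and $AA^{H}=I$), you use $UP=U$, the isometry of $U$ on $\operatorname{range}(Y)$, and the $Q$-Pythagorean identity for the projector $P=YU$; both give $\|U(s'_{\hat\theta}-s_{t|0})\|_2^2\le\|s'_{\hat\theta}-s_{t|0}\|_Q^2$.

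The gap is in the second term, which is the actual crux of the theorem. You assert that $Us_{t|0}-\Sigma\hat{s}_{t|0}$ ``reduces to'' $UZ\Sigma\nabla_{\varphi}\log\hat{p}_{t|0}$ after ``applying the chain rule and simplifying with $UY=I$'', but you never carry out this identification, and your description of $Z$ as ``the linear realization of $\varphi^{-1}:\mathbb{R}^{L^2}\to\mathbb{C}^{L^2}$'' is wrong and dimensionally inconsistent: with that $Z$ the product $UZ$ is not even defined, since $U$ acts on $d_X$-dimensional spatial vectors. In the paper, $Z$ is the $\ker(T)$-component of the synthesis map composed with the chart inverse, i.e.\ $Y\varphi^{-1}=T^{+}+Z$ with $TZ=0$, where $T$ is the matrix of $\mathbf{x}\mapsto\varphi[U\mathbf{x}]$; so $Z\in\mathbb{C}^{d_X\times L^2}$. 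The identification of the residual then rests on a short chain of identities that your sketch leaves implicit: $\hat{s}_{t|0}=\varphi^{-1}\nabla_{\varphi[\hat{\mathbf{a}}]}\log\hat{p}_{t|0}$ together with $\Sigma\varphi^{-1}=\varphi^{-1}\Sigma$; the chain rule $s_{t|0}=T^{\mathsf T}\nabla_{\varphi[\hat{\mathbf{a}}]}\log\hat{p}_{t|0}$; the covariance identity $TT^{\mathsf T}=\Sigma$, whence $T^{\mathsf T}=T^{+}\Sigma$; and the decomposition $Y\varphi^{-1}=T^{+}+Z$. Only with these does one get $Us_{t|0}=UT^{+}\Sigma\nabla_{\varphi}\log\hat{p}_{t|0}$ and $\Sigma\hat{s}_{t|0}=U(T^{+}+Z)\Sigma\nabla_{\varphi}\log\hat{p}_{t|0}$, hence $Us_{t|0}-\Sigma\hat{s}_{t|0}=-UZ\Sigma\nabla_{\varphi}\log\hat{p}_{t|0}$, which is the stated residual. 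Since the theorem's bound explicitly features $Z$, constructing this operator correctly and verifying the identity is not optional bookkeeping; as written, your argument proves a bound with an unidentified remainder and mislabels the object appearing in the statement.
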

This inequality formalizes a key difference between spherical and Euclidean spectral diffusion: the geometry-induced noise covariance prevents exact equivalence between spatial and frequency score matching, even in the band-limited setting.
Unlike~\cite{crabbe24}, where the losses are equivalent, 
Theorems~\ref{thm:DiffusionFrequency} and~\ref{thm:main} show that the frequency-domain loss is only upper-bounded by the spatial-domain loss.
Theorem~\ref{thm:main} therefore shows that diffusion in the spherical harmonic domain is not merely a change of coordinates of the spatial formulation. The geometry-induced covariance of the spectral noise leads to a genuinely different optimization problem and hence to a different inductive bias.
The proof is given in Section \ref{sec:proofs}.

\section{Numerical Illustrations}
\label{numerics}

The purpose of this section is to provide illustrative numerical experiments that complement the theoretical analysis developed in the previous sections. 
Rather than aiming at empirical benchmarking or large-scale performance evaluation, the experiments are designed to validate and visualize the stochastic and geometric effects induced by the spectral formulation of diffusion models on the sphere.

In particular, the numerical results focus on three aspects that play a central role in the theory: 
(i) the behavior of Brownian motion under the spherical discrete Fourier transform, 
(ii) the resulting non-isotropic and structured covariance of the driving noise in the frequency domain, and 
(iii) the practical implications of this geometry-induced noise structure for forward and reverse diffusion dynamics. 
All experiments are conducted in a finite-dimensional, band-limited setting, consistent with the framework of Section~\ref{sec:contribution}.

We first consider synthetic experiments that directly verify the theoretical characterization of the spherical mirrored Brownian motion, by comparing empirical estimates of the covariance matrix with the analytical expression derived in Section~\ref{sec:contribution}. 
These experiments serve as a sanity check of the discrete harmonic analysis and noise transformation underlying the spectral diffusion SDEs.

We then include a learning-based experiment on MNIST-Sphere, a widely used dataset obtained by mapping handwritten digit images onto the sphere. 
Although the theoretical results of this paper are independent of any specific data distribution, MNIST-Sphere provides a nontrivial and structured test case for spectral diffusion models: it exhibits strong low-frequency spectral bias together with localized, anisotropic spatial features, making it sensitive to the non-isotropic covariance structure induced in the frequency domain.
The goal of this experiment is not to claim improved generative performance, but to assess whether the spectral diffusion framework derived in this work remains numerically stable and produces samples of comparable quality when applied to realistic spherical data.

Throughout this section, numerical results should therefore be interpreted as qualitative and diagnostic, rather than as evidence of empirical superiority. 
They are intended to confirm the theoretical findings, illustrate the impact of geometry on diffusion dynamics, and provide intuition on how spectral diffusion on the sphere departs from its Euclidean counterpart through geometry-induced stochastic structure.

\subsection{Covariance Matrix of the spherical mirrored Brownian motion}
In this section, we report the plot of the covariance matrix $\Sigma$ of $\varphi[{\mathbf{\tilde{v}}}(t)]$ to highlight its particular structure, as defined in Eq. \eqref{eq: Sigma cov matrix}. 
\begin{figure}[htbp]
  \centering
  \includegraphics[width=\textwidth]{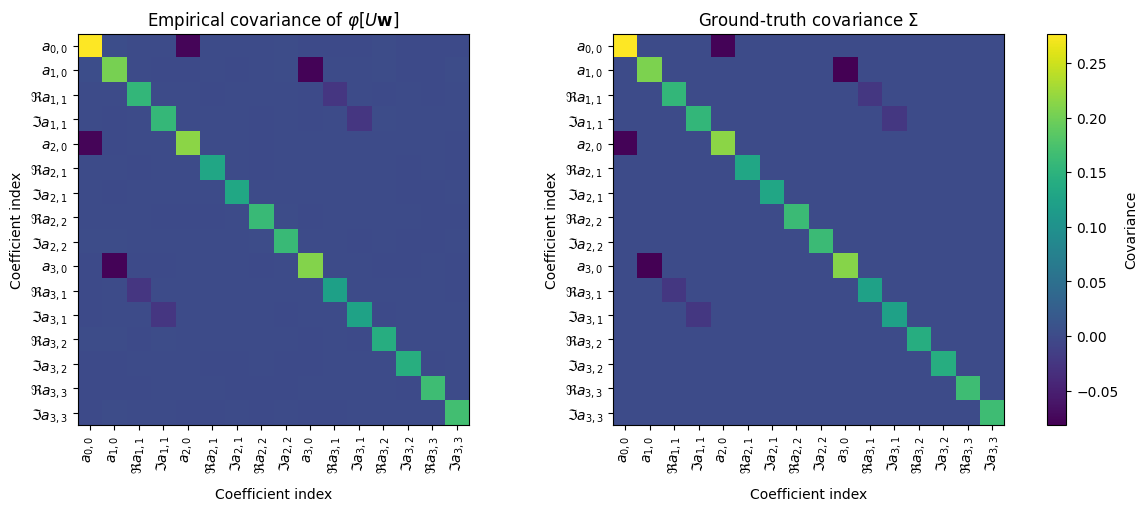}
  \caption{Comparison between the empirical covariance matrix of $\varphi[{\mathbf{\tilde{v}}}(t)]$, for $t=0$, estimated from $5\times10^{4}$ i.i.d samples (left), and the theoretical covariance matrix $\Sigma$ (right), with maximum bandlimit  $L=4$.}

  \label{fig:cov-phi-Uw}
\end{figure}

\subsection{Model.} For the experimental part, we parametrize the spatial score model $\mathbf{s}_\theta$ and the frequency score model 
$\hat{\mathbf{s}}_{\hat{\theta}}$ as transformer encoders with 10 attention and MLP layers, 
each with 12 heads and dimension $d_{\text{model}} = 72$. Both models have learnable 
positional encoding as well as diffusion time encoding through random Fourier features 
composed with a learnable dense layer. This results in a model with approximately 2M parameters. 
We use a VP-SDE with linear noise scheduling and $\beta_{\min} = 0.1$ and 
$\beta_{\max} = 10$. The score models are trained with the denoising 
score-matching loss, as defined in Section~\ref{subsec:losses}. The model is trained for $200$ epochs with batch size 32, AdamW optimizer, and cosine 
learning rate scheduling ($20$ warmup epochs, $lr_{\max} = 10^{-3}$).

\subsection{Spatial and Frequency Models.} The essential distinction between spatial-based and frequency-based diffusion models lies in the representation of their input spherical maps. Since all datasets are naturally 
defined in the spatial domain, they can be directly processed by the spatial-domain 
diffusion models~$\mathbf{s}_\theta$. For the frequency-domain diffusion models~$\hat{\mathbf{s}}_{\theta}$, 
each signal is first transformed into the frequency domain using a Spherical Discrete 
Fourier Transform (Spherical DFT).

In the spatial domain, the forward and reverse diffusion processes follow the SDEs 
given in Equations~\eqref{eq:forwardSDE} and~\eqref{eq:revdif}. In the frequency domain, the corresponding forward and reverse processes satisfy the modified SDEs in Equations~\eqref{eq:ForwardSDE_Frequency} and~\eqref{eq:ReverseSDE_Frequency}. 
The denoised samples~$\hat{\mathbf{x}}{(0)}$ obtained in the frequency domain can be 
converted back into the spatial domain by applying the inverse Spherical DFT: $\hat{x}{(0)} \;\mapsto\; Y\hat{x}{(0)}.$

In what follows, we denote by 
$S_{\text{spat}} \subset \mathbb{R}^{2L(2L-1)}$ and 
$S_{\text{freq}} \subset \mathbb{R}^{L^2}$ 
the spatial-domain and frequency-domain representations of the samples generated by the respective models. For each model, we generate $|S_{\text{time}}| = |S_{\text{freq}}| = 1{,}000$ samples by running the diffusion process for $T = 1{,}000$ timesteps.

\subsection{Learning-Based Illustration on MNIST-Sphere}

To complement the synthetic experiments and assess the practical implications of the proposed framework on structured spherical data, we consider a learning-based illustration on MNIST projected onto the unite sphere, which we'll call MNIST-Sphere from here on out.
MNIST-Sphere is obtained by mapping handwritten digit images from the MNIST dataset onto the sphere via a fixed spherical projection, following the construction
introduced in~\cite{cgc18}, and resulting in real-valued signals defined on $\mathbb{S}^2$. This construction has since become a standard benchmark for spherical generative
models.
While extrinsic in nature, this dataset is widely used as a controlled testbed for spherical generative models and provides a nontrivial combination of global structure and localized spatial features.

Importantly, MNIST-Sphere exhibits a strongly non-uniform spectral energy distribution, with most of its variance concentrated at low spherical harmonic degrees and pronounced anisotropy in the spatial domain.
As a result, it constitutes a challenging test case for diffusion processes driven by non-isotropic and geometry-induced noise in the frequency domain.
The purpose of this experiment is not to demonstrate improved generative performance, but to verify that the spectral diffusion SDEs derived in Section~\ref{sec:contribution} remain numerically stable and compatible with learning and sampling on realistic spherical data.

We train two diffusion models with identical architectures and optimization settings:
a spatial-domain model operating directly on sampled spherical maps, and a frequency-domain model acting on spherical harmonic coefficients subject to the mirror symmetry constraints described in Section~\ref{subsec:losses}.
Both models are implemented as transformer-based score networks with shared architectural hyperparameters and are trained using denoising score matching under a variance-preserving SDE.
All hyperparameters are kept fixed across representations to isolate the effect of the diffusion domain.

In the spatial domain, forward and reverse dynamics follow the standard SDEs in Equations~\eqref{eq:forwardSDE} and~\eqref{eq:revdif}.
In the frequency domain, diffusion is governed by the modified SDEs in Equations~\eqref{eq:ForwardSDE_Frequency} and~\eqref{eq:ReverseSDE_Frequency}, where the driving noise has nontrivial covariance $\Sigma$ induced by the spherical discrete Fourier transform.
Samples generated in the frequency domain are mapped back to the spatial domain via the inverse spherical DFT for evaluation.

To compare the distributions induced by the two models, we adopt the \emph{sliced Wasserstein distance} introduced in~\cite{bonneel2015}, which provides a computationally tractable proxy for distributional similarity in high-dimensional spaces.
With a slight abuse of notation, we denote the sliced Wasserstein distance between two empirical sample sets $S_1$ and $S_2$ as $SW(S_1,S_2)$.
For two distributions $\mu_1$ and $\mu_2$, it is defined as
\begin{equation}
    SW_p(\mu_1, \mu_2)
    :=
    \int_{\mathbb{S}^{d-1}}
    W_p\!\left(P_u \# \mu_1,\; P_u \# \mu_2\right)\,du,
    \label{eq:sw_def}
\end{equation}
where $\mathbb{S}^{d-1}$ denotes the unit sphere in dimension $d$, $P_u(x) = u \cdot x$ is the projection of $x$ onto $u$, $P_u \# \mu$ is the push-forward of $\mu$ by $P_u$, and $W_p$ is the Wasserstein distance of order $p$.
In practice, we approximate~\eqref{eq:sw_def} via Monte Carlo integration using $n=1{,}000$ random projections and $p=2$:
\begin{equation}
    \hat{SW}_p(\mu_1, \mu_2)
    :=
    \frac{1}{n}
    \sum_{i=1}^n
    W_p\!\left(P_{u_i} \# \mu_1,\; P_{u_i} \# \mu_2\right).
    \label{eq:sw_est}
\end{equation}

Table~\ref{tab:sliced_wasserstein_metrics} reports sliced Wasserstein distances as a diagnostic measure of distributional consistency between generated samples and the MNIST-Sphere data distribution, rather than as a competitive performance metric.
We stress that sliced Wasserstein distances computed in the spatial and frequency
domains live in spaces of different dimensionality; consequently, their absolute
values are not directly comparable across metric domains.

\begin{table}[ht]
\centering
\caption{Sliced Wasserstein distances (mean $\pm$ 2 standard errors) between generated samples and the MNIST-Sphere data distribution, computed in both the spatial and frequency domains.
Comparable values across diffusion domains indicate that the geometry-induced, non-isotropic noise structure in the frequency-domain SDE does not prevent stable training or sampling, rather than signaling improved generative performance.}
\label{tab:sliced_wasserstein_metrics}
\begin{tabular}{l l cc}
\hline
\textit{Dataset} & \textit{Metric Domain} & \multicolumn{2}{c}{\textit{Diffusion Domain}} \\
 &  & \textit{Frequency} & \textit{Spatial} \\
\hline
\multirow{2}{*}{\textbf{MNIST-Sphere}} 
& Frequency & 0.0069 $\pm$ 0.0002 & 0.0074 $\pm$ 0.0002 \\
& Spatial   & 0.036  $\pm$ 0.001  & 0.035  $\pm$ 0.001 \\
\hline
\end{tabular}

\vspace{0.3em}
\footnotesize
\textit{Note:} Absolute values are not directly comparable across metric domains due to the different dimensionalities of the spatial and frequency representations.
\end{table}

Overall, the MNIST-Sphere experiment serves as a robustness check demonstrating that,
despite the non-isotropic noise structure and the lack of equivalence between spatial and frequency score matching established theoretically (Theorem~\ref{thm:main}),
spectral diffusion on the sphere remains numerically viable and induces sample distributions of comparable quality to its spatial-domain counterpart.

\section{Proofs}\label{sec:proofs}
This section provides detailed proofs of the main results stated in the paper, highlighting the underlying assumptions and key derivations.

\begin{proof}[Proof of Lemma \ref{lemma:DFT_mat_rep}]
Recall that $Q = Q_\theta \otimes I_{N_\phi}$, so that in the product $Y^H Q$ each set of $N_\phi$ columns corresponding to colatitude $\theta_j$ is multiplied by the weight $q_j$. Then the $(\ell,m)$-th entry of the product $(Y^H Q)\mathbf{x}$ reads
\[
\left[(Y^H Q)\mathbf{x}\right]_{(\ell,m)}
=
\sum_{j=0}^{N_\theta-1} \sum_{k=0}^{N_\phi-1} q_j  Y^*_{\ell,m}(\theta_j,\phi_k)  f(\theta_j,\phi_k),
\]
which coincides exactly with the analysis formula for $\hat{a}_{\ell,m}$. Hence, $\hat{\mathbf{a}} = U\mathbf{x}$, as claimed by Equation~\ref{eqn:lemma1}.
\end{proof}

\begin{proof}[Proof of Lemma \ref{lemma:pseudoinverseU}]
We first prove the right pseudoinverse property. By definition, $U = Y^H Q$, and the quadrature is exact for $L$-band-limited functions. Therefore, we have
\[
UY = Y^H Q Y = I_{L^2}.
\]

For the left pseudoinverse, let $P := YU = Y Y^H Q$. 

First, assume that $\mathbf{x}$ is $L$-band-limited, meaning $\mathbf{x} = Y \hat{\mathbf{a}}$ for some $\hat{\mathbf{a}} \in \mathbb{C}^{L^2}$. Then
\[
P \mathbf{x} = Y Y^H Q (Y \hat{\mathbf{a}}) = Y (Y^H Q Y) \hat{\mathbf{a}} = Y I_{L^2} \hat{\mathbf{a}} = Y \hat{\mathbf{a}} = \mathbf{x},
\]
so $1 \Rightarrow 2$.

Conversely, assume that $P \mathbf{x} = \mathbf{x}$. By definition, each column of $P$ is a linear combination of the columns of $Y$, so $\operatorname{range}(P) \subseteq \operatorname{range}(Y)$. On the other hand, for any $\mathbf{v} \in \operatorname{range}(Y)$, we can write $\mathbf{v} = Y \mathbf{a}$ for some $\mathbf{a} \in \mathbb{C}^{L^2}$, and then
\[
P \mathbf{v} = Y Y^H Q (Y \mathbf{a}) = Y (Y^H Q Y)  \mathbf{a} = Y I_{L^2} \mathbf{a} = Y \mathbf{a} = \mathbf{v},
\]
showing that $\mathbf{v} \in \operatorname{range}(P)$ and thus $\operatorname{range}(Y) \subseteq \operatorname{range}(P)$. We conclude that $\operatorname{range}(P) = \operatorname{range}(Y)$. Consequently, if $P \mathbf{x} = \mathbf{x}$, then $\mathbf{x} \in \operatorname{range}(Y)$, so there exists $\hat{\mathbf{a}} \in \mathbb{C}^{L^2}$ with $\mathbf{x} = Y \hat{\mathbf{a}}$, which shows that $\mathbf{x}$ is $L$-band-limited, which proves the implication $2 \Rightarrow 1$.
\end{proof}

\begin{proof}[Proof of Proposition \ref{prop:U_isom}]
To show that $U$ is an isometry, we need to prove that it preserves the inner product between the two spaces.  

Let $\mathbf{x}_1, \mathbf{x}_2 \in C^{d_X}_{\mathrm{band\text{-}limited}}$. By definition, there exist $\hat{\mathbf{a}}_1, \hat{\mathbf{a}}_2 \in \mathbb{C}^{\hat{d}_X}$ such that
\[
\mathbf{x}_1 = Y \hat{\mathbf{a}}_1, \quad \mathbf{x}_2 = Y \hat{\mathbf{a}}_2.
\]

Then
\begin{align*}
\langle U \mathbf{x}_1, U \mathbf{x}_2 \rangle 
&= (U \mathbf{x}_1)^H (U \mathbf{x}_2) \\
&= \mathbf{x}_1^H U^H U \mathbf{x}_2 \\
&= \mathbf{x}_1^H U^H Y^H Q Y U \mathbf{x}_2 \quad \text{(using the right pseudoinverse: $UY = I_{\hat{d}_X}$)} \\
&= \hat{\mathbf{a}}_1^H Y^H Q Y \hat{\mathbf{a}}_2 \\
&= \mathbf{x}_1^H Q \mathbf{x}_2 \\
&= \langle \mathbf{x}_1, \mathbf{x}_2 \rangle_Q.
\end{align*}

Hence, $U$ preserves the inner product and is therefore an isometry.
\end{proof}

\begin{proof}[Proof of Lemma~\ref{lemma:SphericalDFT_BM}]
We prove each property in turn.

\textit{(1) Symmetry.} This property follows directly from the symmetry of the spherical DFT, which ensures that for all $(\ell,m)$,
\[
\mathbf{v}_{(\ell,m)} = (-1)^m \mathbf{v}_{(\ell,-m)}^*.
\]

\textit{(2) Real Brownian motions.} Consider the components $\mathbf{v}_{(\ell,0)}$. By symmetry, these are real-valued. To verify that they are standard Brownian motions up to scaling, we first decompose the DFT operator as $U = U_{re} + i  U_{im}$ and compute the covariance structure. Define
\[
U_{re}U_{re}^{\sf T} \quad \text{and} \quad U_{im}U_{im}^{\sf T}.
\]  
A direct computation gives
\[
[U_{re}U_{re}^{\sf T}]_{(\ell,0),(\ell',0)} = 2 C_{(\ell,0),(\ell',0)}, \quad
[U_{im}U_{im}^{\sf T}]_{(\ell,0),(\ell',0)} = 0, \quad
[U_{re}U_{im}^{\sf T}]_{(\ell,0),(\ell',0)} = 0,
\]
where
\[
C_{(\ell,m),(\ell',m')} := \frac{2L-1}{2} N_{\ell,m} N_{\ell',m'} \sum_{j=0}^{2L-1} q_j^2 P_{\ell,m}(\cos \theta_j) P_{\ell',m'}(\cos \theta_j).
\]

\noindent Now, for $s,t\ge 0$ the increment $\mathbf{v}_{(\ell,0)}(t+s) - \mathbf{v}_{(\ell,0)}(s) = U\left( \mathbf{w}(t+s)-\mathbf{w}(s)\right)$ is Gaussian with mean zero and variance
\[
\operatorname{\mathbb{V}ar}\left(\mathbf{v}_{(\ell,0)}(t+s)-\mathbf{v}_{(\ell,0)}(s)\right) = t [UU^H]_{(\ell,0),(\ell,0)} = t2C_{(\ell,0),(\ell,0)}.
\]  
The continuity and independence of increments follow from the linearity of $U$ and the corresponding properties of $\mathbf{w}$. Hence
\[
\mathbf{v}_{(\ell,0)}/\sqrt{2C_{(\ell,0),(\ell,0)}}
\]
is a standard real Brownian motion.

\textit{(3) Complex Brownian motions.} Consider $0 < m < L$. Decompose $\mathbf{v}_{(\ell,m)}$ into real and imaginary parts: $\mathbf{v}_{(\ell,m)} = \Re(\mathbf{v}_{(\ell,m)}) + i\Im(\mathbf{v}_{(\ell,m)})$. Using the previously computed covariances, we have
\[
\operatorname{\mathbb{V}ar}\left( \Re(\mathbf{v}_{(\ell,m)})(t+s)-\Re(\mathbf{v}_{(\ell,m)})(s) \right) = tC_{(\ell,m),(\ell,m)},
\]
\[
\operatorname{\mathbb{V}ar}\left( \Im(\mathbf{v}_{(\ell,m)})(t+s)-\Im(\mathbf{v}_{(\ell,m)})(s)\right) = tC_{(\ell,m),(\ell,m)},
\]
\[
\operatorname{\mathbb{C}ov}\left( \Re(\mathbf{v}_{(\ell,m)}),\Im(\mathbf{v}_{(\ell,m)})\right) = 0.
\]  
Therefore, the real and imaginary parts are independent standard Brownian motions up to scaling, and we can write
\[
\mathbf{v}_{(\ell,m)} = \sqrt{C_{(\ell,m),(\ell,m)}} \left( \tilde{\mathbf{w}}^1_{(\ell,m)} + i  \tilde{\mathbf{w}}^2_{(\ell,m)} \right),
\]
with $\tilde{\mathbf{w}}^1_{(\ell,m)}$ and $\tilde{\mathbf{w}}^2_{(\ell,m)}$ independent standard Brownian motions.

\noindent This completes the proof.
\end{proof}

\begin{proof}[Proof of Theorem \ref{thm:DiffusionFrequency}]
We consider first the \emph{forward SDE}. 
Since $\hat{\mathbf{a}} = U\mathbf{x}$ is linear, its Jacobian is $U$ and its Hessian vanishes.
Applying the multivariate It\^o's lemma (see \cite[Equation (8.3)]{Kloeden1992}), we obtain
\begin{equation}
d\hat{\mathbf{a}} = U \mathbf{f}(\mathbf{x}, t)  dt + g(t) U  d\mathbf{w}.
\end{equation}
By Lemma \ref{lemma:SphericalDFT_BM}, $\tilde{\mathbf{v}} = U \mathbf{w}$ is a spherical mirrored Brownian motion on $\mathbb{C}^{L^2}$, giving the desired forward SDE.

We now consider the \emph{reverse-time SDE}. Following \cite{crabbe24}, we proceed in three steps: 
\begin{enumerate}
\item define a forward SDE for the truncated coordinates $\varphi[\hat{\mathbf{a}}]$, 
\item write the associated reverse-time SDE for $\varphi[\hat{\mathbf{a}}]$, and 
\item lift it back to the full process $\hat{\mathbf{a}}$.
\end{enumerate}

\textit{Step 1.} Using Equation \eqref{eq:ForwardSDE_Frequency} and Lemma \ref{lemma:SphericalDFT_BM}, the forward SDE for $\varphi[\hat{\mathbf{a}}]$ reads:
\begin{equation}
d\varphi[\hat{\mathbf{a}}] = \varphi\left[ U \mathbf{f}(Y \varphi^{-1}(\varphi[\hat{\mathbf{a}}]), t) \right]  dt + g(t) \Lambda d\tilde{\mathbf{w}},
\end{equation}
where
\begin{itemize}
    \item $\varphi^{-1}: \mathbb{R}^{L^2} \to \mathbb{C}^{L^2}$ satisfies     $\varphi^{-1}(\varphi[\hat{\mathbf{a}}]) = \hat{\mathbf{a}}, \quad \forall \hat{\mathbf{a}} \in \mathbb{C}^{L^2}_{\mathrm{constr}}.$
    \item $\Lambda \in \mathbb{R}^{L^2 \times L^2}$ is such that $\Sigma = \Lambda \Lambda^\top$, with 
    \[
    \Sigma_{(\ell,m),(\ell',m')} = 
    \begin{cases}
    2C(\ell,0;\ell',0), & m=m'=0,\\
    C(\ell,m;\ell',m), & m=m'>0,\\
    0, & \text{otherwise},
    \end{cases}
    \]
    and $\tilde{\mathbf{w}}$ is a standard Brownian motion in $\mathbb{R}^{L^2}$ such that $\Lambda \tilde{\mathbf{w}} = \varphi[U \mathbf{w}']$.
\end{itemize}

 \textit{Step 2.} The reverse-time SDE for $\varphi[\hat{\mathbf{a}}]$ \cite{anderson82} is
\begin{equation}
d\varphi[\hat{\mathbf{a}}] = \left\{ \varphi\left[ U \mathbf{f}(Y \varphi^{-1}(\varphi[\hat{\mathbf{a}}]), t) \right] 
- g(t)^2 \Lambda \Lambda^\top \nabla_{\varphi[\hat{\mathbf{a}}]} \log \hat{p}_t(\varphi[\hat{\mathbf{a}}]) \right\} dt + g(t) \Lambda 
d\hat{\mathbf{w}},
\end{equation}
with $\hat{\mathbf{w}}$ a standard Brownian motion in $\mathbb{R}^{L^2}$.

\textit{Step 3.} Applying $\varphi^{-1}$ and using It\^o's lemma gives the reverse SDE for $\hat{\mathbf{a}}$:
\begin{equation}
\begin{split}
d\hat{\mathbf{a}} &= \left\{ \varphi^{-1}\left( \varphi[U \mathbf{f}(Y \hat{\mathbf{a}}, t)] \right) 
- g(t)^2 \varphi^{-1}\left( \Sigma \nabla_{\varphi[\hat{\mathbf{a}}]} \log \hat{p}_t(\varphi[\hat{\mathbf{a}}]) \right) \right\} dt \\
&\quad + g(t) \varphi^{-1}(\Lambda)  d\hat{\mathbf{w}}.
\end{split}
\end{equation}
where we used that $\Lambda\Lambda^T = \Sigma$.
Since $\varphi^{-1}$ is linear, it commutes with deterministic linear operators such as $\Sigma$. Therefore, it holds that $$\Sigma \varphi^{-1}(y) = \varphi^{-1}(\Sigma y)$$ for every $y \in \mathbb{R}^{L^2}$. Then, the reverse-time SDE in the $\hat{\mathbf{a}}$-coordinates can be written as

\begin{equation}
d\hat{\mathbf{a}} = \left\{ U \mathbf{f}(Y \hat{\mathbf{a}}, t) - g(t)^2 \Sigma  \hat{\mathbf{s}}(\hat{\mathbf{a}}, t) \right\} dt + g(t)  d\tilde{\mathbf{v}},
\end{equation}
where $\tilde{\mathbf{v}} = \varphi^{-1}(\Lambda \hat{\mathbf{w}})$ is a spherical mirrored Brownian motion and $\hat{\mathbf{s}}$ is the score function defined in Equation \eqref{eq:score_freq}.
\end{proof}

\begin{proof}[Proof of Theorem \ref{thm:main}]
The proof proceeds in two steps. \textit{Step 1} establishes preliminary results expressing the score in terms of $\varphi[\hat{\mathbf{a}}]$. \textit{Step 2} uses these results to relate the frequency- and spatial-domain score matching losses.

\textit{Step 1: Score in terms of $\varphi[\hat{\mathbf{a}}]$.}  
Since $\mathbf{x} = Y \varphi^{-1}(\varphi[\hat{\mathbf{a}}])$ with $\hat{\mathbf{a}} = U \mathbf{x}$, using the change-of-variable formula yields
\[
p_{t|0}\left( \mathbf{x}(t) \big\vert \mathbf{x}(0)\right) = C \cdot \hat{p}_{t|0}\left( \varphi[\hat{\mathbf{a}}(t)]  \big \vert  \varphi[\hat{\mathbf{a}}(0)] \right),
\]
where $C$ is constant because $\mathbf{x} \mapsto \varphi[U\mathbf{x}]$ is linear.  
Define $\varphi[U\mathbf{x}] = V U_{\text{col}} \mathbf{x} = T \mathbf{x}$, where $V \in \mathbb{R}^{L^2 \times 2L^2}$, $U_{\text{col}} = \begin{pmatrix} U_{re} \\ U_{im} \end{pmatrix}$, and $T \in \mathbb{R}^{L^2 \times 2L(2L-1)}$.

\noindent Note preliminarily that 
\begin{itemize}
    \item For any $\mathbf{x}$, it holds that 
    \begin{equation}\label{eq:res0}
    \|U \mathbf{x}\|_2^2 \le \|\mathbf{x}\|_Q^2.
    \end{equation}
    Indeed, let $A := U Q^{-1/2}$ with $Q^{-1/2} Q^{-1/2} = Q^{-1}$. Then
    \[
    \|U\mathbf{x}\|_2^2 = \mathbf{x}^H U^H U \mathbf{x} = (Q^{1/2}\mathbf{x})^H A^H A (Q^{1/2}\mathbf{x}) \le \lambda_{\max}(A^H A) \|\mathbf{x}\|_Q^2.
    \]
    Using $U = Y^H Q$, we obtain $A A^H = I_{L^2}$, so $\lambda_{\max}(A^H A) = 1$.
    
    \item It holds that 
        \begin{equation}
        \label{eq: TT^t = Sigma}
            T T^{\sf T} = \Sigma
        \end{equation}
        This follows from $U_{\text{col}} U_{\text{col}}^{\sf T} = \mathrm{diag}(U_{re}^2, U_{im}^2)$, and $V$ extracts the relevant truncation indices.
    
    \item For any $\mathbf{y} \in \mathbb{R}^{L^2}$, it holds that
    \begin{equation}\label{eq:res2}
    T^{\sf T} \mathbf{y} = T^+ \Sigma \mathbf{y},
    \end{equation} 
    where $T^+ = T^T(TT^T)^{-1}$ is the Moore-Penrose pseudoinverse. Indeed, to obtain the result, simply multiply both sides of \eqref{eq: TT^t = Sigma} by $T^+$. Moreover, also $Y \varphi^{-1}$ acts as a right pseudoinverse of $T$:
    \[
    T Y \varphi^{-1}[\mathbf{y}] = \mathbf{y}, \quad \forall \mathbf{y}.
    \]
    So, in general, we can write
    \begin{equation}
    \label{eq:pseudoinverse_decomposition}
    Y \varphi^{-1} = T^+ + Z, \quad \text{with } TZ = 0.
    \end{equation}
    Here $Z$ collects the components of $Y\varphi^{-1}$ lying in $\ker(T)$.
\end{itemize}

Using these results, the chain rule gives
\begin{equation}
\label{eq:chain_rule_score}
\nabla_{\mathbf{x}(t)} \log p_{t|0}\left( \mathbf{x}(t)\big\vert \mathbf{x}(0) \right) = T^{\sf T} \nabla_{\varphi[\hat{\mathbf{a}}(t)]} \log \hat{p}_{t|0}\left( \varphi[\hat{\mathbf{a}}(t)] \big\vert \varphi[\hat{\mathbf{a}}(0)]\right).
\end{equation}

\textit{Step 2: Bounding the frequency-domain loss.}  
By Equation~\eqref{eq:score_freq} and using the triangle inequality $\|u+v\|_2^2 \le 2(\|u\|_2^2+\|v\|_2^2)$, it holds that
\begin{align*}
&\left\|\hat{s}_{\hat{\theta}}(\hat{\mathbf{a}},t) - \Sigma \hat{s}_{t|0}(\hat{\mathbf{a}},t)\right\|_2^2 \\
&\qquad= \left\|\hat{s}_{\hat{\theta}}(\hat{\mathbf{a}},t) - \Sigma \varphi^{-1} \nabla_{\varphi[\hat{\mathbf{a}}(t)]} \log \hat{p}_{t|0} \right\|_2^2 \\
&\qquad= \left\| \hat{s}_{\hat{\theta}}(\hat{\mathbf{a}},t) - U Y \varphi^{-1} \left(\Sigma\nabla_{\varphi[\hat{\mathbf{a}}(t)]} \log \hat{p}_{t|0} \right)\right\|_2^2 \\
&\qquad= \left\| \hat{s}_{\hat{\theta}}(\hat{\mathbf{a}},t) - U(T^+ + Z) \Sigma\nabla_{\varphi[\hat{\mathbf{a}}(t)]} \log \hat{p}_{t|0} \right\|_2^2 \\
&\qquad\le 2 \left( \left\| \hat{s}_{\hat{\theta}}(\hat{\mathbf{a}},t) - U T^+ \Sigma \nabla_{\varphi[\hat{\mathbf{a}}(t)]} \log \hat{p}_{t|0} \right\|_2^2 
+ \left\| U Z \Sigma\nabla_{\varphi[\hat{\mathbf{a}}(t)]} \log \hat{p}_{t|0} \right\|_2^2 \right).
\end{align*}
Note that the appearance of $\Sigma$ inside the loss (rather than as a metric) reflects the form of the reverse drift induced by the non-isotropic noise.

Finally, the first term can be upper-bounded using Equations \eqref{eq:res0} and \eqref{eq:res2}, together with the chain rule:
\begin{align*}
\left\| \hat{s}_{\hat{\theta}} - U T^+ \Sigma \nabla_{\varphi[\hat{\mathbf{a}}(t)]} \log \hat{p}_{t|0} \right\|_2^2 
&= \left\| U s'_{\hat{\theta}}(\mathbf{x},t) - U \nabla_{\mathbf{x}(t)} \log p_{t|0}\left(\mathbf{x}(t)\big\vert\mathbf{x}(0)\right) \right\|_2^2 \\
&\le \left\| s'_{\hat{\theta}}(\mathbf{x},t) - \nabla_{\mathbf{x}(t)} \log p_{t|0}\left( \mathbf{x}(t) \big\vert \mathbf{x}(0)\right) \right\|_Q^2.
\end{align*}
This concludes the proof.
\end{proof}

\section*{Funding}
CD was partially supported by the PRIN 2022 project GRAFIA (Geometry of Random Fields and its Applications), funded by the Italian Ministry of University and Research (MUR).

\bibliographystyle{plain}
\bibliography{bibliography}



\end{document}